\newtheorem{theorem}[equation]{Theorem}
\newtheorem{proposition}[equation]{Proposition}
\newtheorem{lemma}[equation]{Lemma}
\newtheorem{corollary}[equation]{Corollary}
\newtheorem*{theorem*}{Theorem}
\theoremstyle{definition}
\newtheorem{remark}[equation]{Remark}
\DeclareMathOperator{\Spec}{Spec}
\DeclareMathOperator{\Max}{Max}
\DeclareMathOperator{\rad}{rad}
\DeclareMathOperator{\depth}{depth}
\DeclareMathOperator{\gld}{gld}
\DeclareMathOperator{\Zg}{Zg}
\let\mod\relax
\DeclareMathOperator{\mod}{\mathsf{mod}}
\DeclareMathOperator{\Mod}{\mathsf{Mod}}
\DeclareMathOperator{\Flat}{\mathsf{Flat}}
\DeclareMathOperator{\Proj}{\mathsf{Proj}}
\DeclareMathOperator{\GProj}{\mathsf{GProj}}
\DeclareMathOperator{\Gproj}{\mathsf{Gproj}}
\DeclareMathOperator{\sGProj}{\underline{\mathsf{GProj}}}
\DeclareMathOperator{\sGproj}{\underline{\mathsf{Gproj}}}
\DeclareMathOperator{\proj}{\mathsf{proj}}
\DeclareMathOperator{\CM}{\mathsf{CM}}
\DeclareMathOperator{\sCM}{\underline{\mathsf{CM}}}
\DeclareMathOperator{\add}{\mathsf{add}}
\DeclareMathOperator{\Ab}{Ab}
\DeclareMathOperator{\fp}{fp}
\DeclareMathOperator{\id}{\mathrm{id}}
\DeclareMathOperator{\Hom}{Hom}
\DeclareMathOperator{\End}{End}
\DeclareMathOperator{\Ext}{\mathrm{Ext}}
\DeclareMathOperator{\RGamma}{\mathrm{R}\Gamma}
\DeclareMathOperator{\Ker}{\mathrm{Ker}}
\DeclareMathOperator{\Coker}{\mathrm{Coker}}
\DeclareMathOperator{\PH}{P}
\DeclareMathOperator{\Z}{\mathrm{Z}}
\newcommand{\pp}{\mathfrak{p}}
\newcommand{\qq}{\mathfrak{q}}
\newcommand{\mm}{\mathfrak{m}}
\newcommand{\nn}{\mathfrak{n}}
\newcommand{\fa}{\mathfrak{a}}
\newcommand{\fb}{\mathfrak{b}}
\newcommand{\cA}{\mathcal{A}}
\newcommand{\cB}{\mathcal{B}}
\newcommand{\cC}{\mathcal{C}}
\newcommand{\cT}{\mathcal{T}}
\newcommand{\ZZ}{\mathbb{Z}}
\renewcommand{\H}{\mathrm{H}}
\newcommand{\K}{\mathsf{K}}
\newcommand{\D}{\mathsf{D}}
\newcommand{\ac}{\mathsf{ac}}
\newcommand{\tac}{\mathsf{tac}}
\definecolor{mygreen}{rgb}{0.1,0.6,0.3}
\newcommand{\isoto}{\xrightarrow{\smash{\raisebox{-0.25em}{$\sim$}}}}
\newcommand{\hr}{\hookrightarrow}
\newcommand{\op}{\mathrm{op}}
\newcommand{\cp}{\mathrm{c}}
\crefname{enumi}{}{}
\Crefname{enumi}{}{}
\crefname{enumii}{}{}
\Crefname{enumii}{}{}
\crefname{enumiii}{}{}
\Crefname{enumiii}{}{}
\renewcommand{\p@enumii}{}
\renewcommand{\p@enumiii}{}
\numberwithin{equation}{section}
\crefname{equation}{}{}
\Crefname{equation}{}{}
\let\@wraptoccontribs\wraptoccontribs
\title[Indecomposable pure-injective objects in stable categories]{Indecomposable pure-injective objects in stable categories of Gorenstein-projective modules over Gorenstein orders}
\subjclass[2020]{13C14, 16G30 (Primary), 13H10, 16D40, 16G60, 16D70 (Secondary)}
\keywords{Gorenstein-projective module; Cohen--Macaulay module; pure-injective module; Ziegler spectrum}
\thanks{Tsutomu Nakamura was supported by PRIN-2017  ``Categories, Algebras: Ring-Theoretical and Homological Approaches (CARTHA)'' and Grant-in-Aid for JSPS Fellows JP20J01865.}
\author{Tsutomu Nakamura}
\address[T. Nakamura]{Department of Mathematics, Faculty of Education, Mie University, 1577 Kurimamachiya-cho, Tsu city, Mie, 514-8507, Japan}
\email{nakamura@edu.mie-u.ac.jp}
\address[R. Laking]{Dipartimento di Informatica - Settore di Matematica, Universit\`a degli Studi di Verona, Strada le Grazie 15 - Ca' Vignal, I-37134 Verona, Italy}
\email{rossana.laking@univr.it}
\begin{document}

\begin{abstract}
We give a result of Auslander--Ringel--Tachikawa type for Gorenstein-projective modules over a complete Gorenstein order.
In particular, we prove that a complete Gorenstein order is of finite Cohen--Macaulay representation type if and only if every indecomposable pure-injective object in the stable category of Gorenstein-projective modules is compact.
\end{abstract}

\maketitle
\tableofcontents

\section{Introduction}
Ringel and Tachikawa \cite{RT74} proved that 
if an Artin algebra $A$ is of finite representation type, then every $A$-module is a direct sum of finitely generated $A$-modules. The converse implication also holds due to Auslander \cite{Aus76}, where he further proved that an Artin algebra $A$ is of finite representation type if and only if every indecomposable $A$-module is finitely generated.
This equivalence, which characterizes infinite representation type by the existence of infinitely generated indecomposable modules, makes us pay attention to the class of large (i.e., possibly infinitely generated) indecomposable modules.
However, this class is in general too huge to talk about representation theory, because it can be a proper class even for a finite-dimensional algebra (\cite[Theorem 3.1]{Sim05}).
This problem can be avoided if we focus on the indecomposable \emph{pure-injective} modules.

Pure-injectivity originally appeared as \emph{algebraic compactness}, which is a property of modules over a ring to have solutions for systems of linear equations; see \cite[Chapter VII]{Fuc70} and \cite[Chapter 7]{JL89} for example.
Ziegler discovered in his model theoretic work \cite{Zie84}  that the isomorphism classes of indecomposable algebraically compact modules form a small set endowed with a natural topology. This topological space is called the \emph{Ziegler spectrum} of the ring.

It is well known that every finitely generated modules over an Artin algebra $A$ is algebraically compact (i.e., pure-injective), and hence the Ziegler spectrum contains, up to isomorphism, all indecomposable finitely generated modules.
In this sense, the Ziegler spectrum is a reasonable place where we can discuss representation theory of algebras including large indecomposable modules. 
Furthermore, giving another proof for the result of Auslander--Ringel--Tachikawa, model theory of modules provides a refined statement: 
\begin{theorem}\label{Artin}
Let $A$ be an Artin algebra.
The following conditions are equivalent.
\begin{enumerate}[label=(\arabic*), font=\normalfont]
\item \label{Artin-finite} $A$ is of finite representation type.
\item \label{Artin-ART} Every right $A$-module is a direct sum of finitely generated right $A$-modules.
\item \label{Artin-pure-inj} Every indecomposable pure-injective right $A$-module is finitely generated.
\end{enumerate}
\end{theorem}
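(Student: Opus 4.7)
The plan is to establish the cycle $(1) \Rightarrow (2) \Rightarrow (3) \Rightarrow (1)$, with the Ziegler spectrum as the main tool in the last step. The implication $(2) \Rightarrow (3)$ is immediate: if $M$ is indecomposable pure-injective and $M = \bigoplus_{i\in I} M_i$ with each $M_i$ finitely generated, then by indecomposability $M \cong M_j$ for some $j$, so $M$ itself is finitely generated.

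For $(1) \Rightarrow (2)$ the plan is to reproduce the model-theoretic version of Ringel--Tachikawa: when $A$ has only finitely many isoclasses of indecomposable finitely generated modules, the functor category $(\mod A^{\op}, \Ab)$ has finite global dimension (Auslander's theorem), and from this one deduces that every right $A$-module is $\Sigma$-pure-injective, equivalently that the lattice of pp-definable subgroups of every module satisfies the descending chain condition. A $\Sigma$-pure-injective module then decomposes, via Gruson--Jensen/Faith--Walker, into a direct sum of indecomposable pure-injective summands with local endomorphism rings; each such summand must be finitely generated, because finite representation type forces $\Zg(A)$ to consist of precisely the finitely many isoclasses of indecomposable finitely generated modules.

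For $(3) \Rightarrow (1)$ I would work directly with $\Zg(A)$. The crucial input is that for an Artin algebra, each indecomposable finitely generated module $N$ is pure-injective with local endomorphism ring and, moreover, appears as an \emph{isolated} point of $\Zg(A)$: the almost split sequence ending in $N$ produces a pp-pair $\varphi/\psi$ whose associated basic open set contains $N$ as its only point. Under hypothesis $(3)$, every point of $\Zg(A)$ arises in this way, so $\Zg(A)$ is a discrete topological space; being quasi-compact in general, $\Zg(A)$ must then be finite, which is precisely the statement that $A$ is of finite representation type.

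The main obstacle, and the technical heart of the proof, is the isolation result used in $(3) \Rightarrow (1)$: showing that every indecomposable finitely generated module over $A$ is an isolated point of $\Zg(A)$. This requires translating between almost split sequences in $\mod A$ and minimal pp-pairs in the language of modules, and it is the step for which it is essential that $A$ is Artin rather than an arbitrary ring. Once this isolation result is in hand, the $(3) \Rightarrow (1)$ direction reduces to quasi-compactness of $\Zg(A)$, and the $(1) \Rightarrow (2)$ direction follows by the Ringel--Tachikawa argument sketched above.
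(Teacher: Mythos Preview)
The paper does not supply its own proof of this theorem; it simply cites \cite[\S 5.3.4]{Pre09}, and the appendix makes explicit that the argument for $(3)\Rightarrow(1)$ rests on quasi-compactness of $\Zg(A)$ together with the isolation of each indecomposable finitely generated module via almost split sequences. Your proposal reproduces exactly this route (and the standard $\Sigma$-pure-injectivity argument for $(1)\Rightarrow(2)$), so it is correct and aligned with the approach the paper invokes; just be mindful that indecomposable projectives need a separate argument for isolation since no AR-sequence ends in them.
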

If ``pure-injective'' is removed from \cref{Artin-finite}, this theorem is precisely due to Auslander--Ringel--Tachikawa.
One can find a proof of \cref{Artin} in \cite[\S 5.3.4]{Pre09}, where topological data of the Ziegler spectrum plays an important role.

Among indecomposable large pure-injective modules, \emph{generic} modules have been well studied. They are infinitely generated indecomposable modules of finite endolength, and this notion was introduced by Crawley-Boevey \cite{CB91}. His work characterizes when finite-dimensional algebra over algebraically closed fields have tame representation type in terms of generic modules. 
Furthermore, generic modules are closely related to tubes in Auslander--Reiten quivers; see Krause \cite{Kra98} and Crawley-Boevey \cite{CB98}.

Ringel \cite{Rin00,Rin11} introduced the notion of \emph{Auslander--Reiten quilts} for $1$-domestic special biserial algebras. 
This compactifies and sews connected components of Auslander--Reiten quivers of such algebras by adding infinitely generated indecomposable pure-injective modules. In consequence, we obtain topological pictures, which yield a better understanding of relationship among the connected components.
Puninski \cite{Pun18} explained that a similar procedure is possible for a plane curve singularity of type $A_\infty$, classifying all indecomposable pure-injective ``infinitely generated Cohen--Macaulay modules'' up to isomorphism, where he considered a (possibly infinitely generated) module to be Cohen--Macaulay if it does not admit any nonzero morphism from the residue field. He and Los \cite{LP19} further classified such indecomposable pure-injective infinitely generated modules for a plane curve singularity of type $D_\infty$. Sadly, Puninski passed away before that paper was published.

For the singularities considered in \cite{Pun18} and \cite{LP19},
the Auslander--Reiten quivers of finitely generated Cohen--Macaulay (CM) modules 
had been well known and they are of countable CM representation type. This fact might have been sufficient for Puninski to start his attempt, while there was no result ensuring the existence of indecomposable infinitely generated CM modules.
We want to dispel this uncertain situation to continue the interesting research of infinitely generated CM representations, under a firm foundation.
Towards this purpose, we prove the following result, which is a Gorenstein-projective analogue to \cref{Artin}. 

\begin{theorem}\label{main-theorem}
Let $R$ be a complete CM local ring and $A$ a Gorenstein $R$-order.
The following conditions are equivalent:
\begin{enumerate}[label=(\arabic*), font=\normalfont]
\item \label{fCM} $A$ is of finite CM representation type.
\item \label{GP} Every Gorenstein-projective right $A$-module is a direct sum of finitely generated Gorenstein-projective right $A$-modules.
\item \label{cCM} Every indecomposable pure-injective object in the stable category $\sGProj A$ of Gorenstein-projective right $A$-modules is compact.   
\end{enumerate}
\end{theorem}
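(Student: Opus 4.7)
My plan is to prove the cycle $(1) \Rightarrow (2) \Rightarrow (3) \Rightarrow (1)$. Throughout I will work in the stable category $\sGProj A$, which—because $A$ is a Gorenstein order—is a compactly generated triangulated category whose subcategory of compact objects coincides, up to direct summands, with $\sgproj A$. A crucial feature of the setting is that $A$ is module-finite over the complete local ring $R$, so every finitely generated $A$-module is pure-injective; consequently, every object of $\sgproj A$ is a pure-injective object of $\sGProj A$ and appears as a point of the Ziegler spectrum $\Zg(\sGProj A)$.

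For $(1) \Rightarrow (2)$ I would adapt the classical Auslander argument to Gorenstein-projectives. Let $M$ be an additive generator of $\gproj A$ and set $\Gamma = \End_A(M)^{\op}$. The restricted Yoneda functor $\Hom_A(M,-)$ induces an equivalence $\gproj A \simeq \proj \Gamma$, which one extends to a fully faithful embedding $\Add M \hookrightarrow \Proj \Gamma$. The key step is to show that on all of $\GProj A$ this functor still lands in $\Proj \Gamma$; this uses the Gorenstein hypothesis (so that one controls resolutions) and the fact that, under finite CM type, $\Gamma$ inherits favourable homological and finiteness properties from $A$. Projective $\Gamma$-modules split as direct sums of indecomposable projectives, and transporting back along the equivalence yields the required decomposition of an arbitrary Gorenstein-projective $A$-module into finitely generated summands.

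For $(2) \Rightarrow (3)$, let $X$ be an indecomposable pure-injective object of $\sGProj A$ and choose a Gorenstein-projective representative $\widetilde{X}$. By (2), $\widetilde{X} \cong \bigoplus_i M_i$ as $A$-modules with $M_i \in \gproj A$; this decomposition descends to $\sGProj A$. Each $M_i$ is pure-injective by the remark above and splits further in $\sGProj A$ as a finite direct sum of indecomposable objects with local endomorphism rings. Since $X$ itself is indecomposable and pure-injective in $\sGProj A$, Azumaya's theorem forces $X$ to be isomorphic in $\sGProj A$ to one of those indecomposable summands, and hence to be compact.

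For $(3) \Rightarrow (1)$ I would use the topology of the Ziegler spectrum $\Zg(\sGProj A)$, which is quasi-compact as a general property of compactly generated triangulated categories. The strategy is to show that, under the complete-local hypothesis, each indecomposable object of $\sgproj A$ corresponds to an isolated (clopen) point of $\Zg(\sGProj A)$: combining this with the injection $\sgproj A \hookrightarrow \Zg(\sGProj A)$ and with condition (3)—which asserts that every point of the spectrum is of this compact form—the spectrum must be discrete, so quasi-compactness forces the point set to be finite, which is exactly finite CM representation type. The main obstacle I anticipate is precisely this last step: to verify that compact indecomposable objects of $\sGProj A$ really do correspond to isolated points of $\Zg(\sGProj A)$, one must adapt the functor-categorical and coherent-functor machinery of Krause and Prest (developed for module categories and for compactly generated triangulated categories in general) to the specific situation of $\sGProj A$ over a complete Gorenstein order, exploiting the way the complete-local hypothesis on $R$ provides the endofiniteness needed for isolation.
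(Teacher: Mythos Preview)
Your argument for $(3)\Rightarrow(1)$ contains a genuine error: the Ziegler spectrum of a compactly generated triangulated category is \emph{not} in general quasi-compact. This is precisely the point where the triangulated situation diverges from the module-category one (where $\Zg(\Mod A)$ is always quasi-compact), and the paper addresses it explicitly. Two ingredients are missing from your sketch. First, one must prove separately that under~(3) the order $A$ has at most an isolated singularity; the paper does this (Proposition~3.4) by taking a non-maximal prime $\pp$ with $A_\pp$ singular, picking an indecomposable non-projective $M\in\CM\widehat{A_\pp}$, and pushing its complete resolution through the right adjoint $\rho:\K_\ac(\Proj\widehat{A_\pp})\to\K_\ac(\Proj A)$ to obtain a non-compact indecomposable pure-injective in $\sGProj A$. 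Second, the paper constructs a single coherent functor $F=\Hom_{\sCM A}(\Omega^d(A/\rad A),-)$ that is nonzero on every nonzero object of $\sCM A$ (Proposition~3.2, via an injective-dimension argument), so that the basic open set $(F)$---which \emph{is} quasi-compact---contains every indecomposable compact. The isolation-via-AR-triangles argument you outline is then run inside $(F)$ rather than inside the whole spectrum. Note also that your appeal to ``endofiniteness'' for isolation would not go through without first knowing isolated singularity: isolation comes from AR-triangles in $\cT^\cp$, and these exist exactly when the stable hom-spaces have finite length over $R$, which is equivalent to isolated singularity.

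Your route for $(1)\Rightarrow(2)$ via an additive generator $M$ and $\Gamma=\End_A(M)^\op$ is different from the paper's (which instead verifies Krause's criterion that every chain of non-isomorphisms between indecomposable compacts is eventually zero, using that finite CM type forces isolated singularity and hence artinian stable endomorphism rings), but it is essentially the Chen--Beligiannis approach and is a legitimate alternative. Your $(2)\Rightarrow(3)$ is correct and matches the paper.
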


Here, an $R$-order means a module-finite $R$-algebra which is a nonzero maximal CM $R$-module. 
The Gorenstein $R$-order $A$ is an $R$-order such that $\Hom_R(A,\omega_R)$ is projective as a right $A$-module, where $\omega_R$ stands for a canonical module of $R$. 

Let us first compare the above theorem with existing results.
The equivalence \cref{fCM}$\Leftrightarrow$\cref{GP} in \cref{main-theorem} is a partial generalization of \cite[Theorem 4.20]{Bel11} due to Beligiannis, whose result treats a complete Gorenstein local ring. 
It is also possible to deduce the equivalence \cref{fCM}$\Leftrightarrow$\cref{GP} from recent work by Psaroudakis and Rump \cite[Corollary 5.14]{PR22} where they extended Chen's result \cite[Main theorem]{Che08} to a wide class of Iwanaga--Gorenstein rings satisfying some conditions. The result of  Chen shows that an Iwanaga--Gorenstein Artin algebra has up to isomorphism only finitely many indecomposable Gorenstein-projective modules if and only if the condition \cref{GP} holds.
The Gorenstein $R$-order $A$ in \cref{main-theorem} is Iwanaga--Gorenstein, but in general an $R$-order being Iwanaga--Gorenstein may not be Gorenstein in our sense. Considering Gorenstein $R$-orders is important for our purpose so that we can identify the finitely generated Gorenstein-projective $A$-modules with the maximal CM $A$-modules; see \cref{CM-Gproj}.
If $R$ in \cref{main-theorem} is artinian, the Gorenstein $R$-order $A$ is nothing but a quasi-Frobenius $R$-algebra, so \cref{main-theorem} follows from \cref{Artin} and Krause's \cite[Proposition 1.16]{Kra00}. 
In the case where $R$ is not artinian, \cref{main-theorem}\cref{cCM} is actually a new characterization for $A$ to have finite CM representation type (even if $R=A$).

We next explain an outline of the proof of \cref{main-theorem}.
Our approach for the implication \cref{fCM}$\Rightarrow$\cref{GP} of \cref{main-theorem} is similar to Beligiannis' idea, but we adopt a slightly different way, which would be simpler and closer to a classical argument; cf. \cite[Theorems 4.5.4 and 4.5.7 and Proposition 4.5.6]{Pre09}.
The implication \cref{GP}$\Rightarrow$\cref{cCM} is almost trivial once we know what the compact objects in $\sGProj A$; see \cref{comp-Gor-case}\cref{equiv-comp}.
A difficult part is to prove the implication \cref{cCM}$\Rightarrow$\cref{fCM}. In general, the Ziegler spectrum of a ring is quasi-compact as a topological space (\cite[Corollary 5.123]{Pre09}), and this fact is used (in the proof of \cref{Artin} by \cite[Theorem 5.3.40]{Pre09}) to find a infinitely generated indecomposable pure-injective module over an Artin algebra of infinite representation type. However, there is no reason that the Ziegler spectrum of the stable category $\sGProj A$ is quasi-compact.
We handle this problem by using \cref{Existence,Fundam,Generator}. 
It is also non-trivial that the condition \cref{cCM} makes $A$ have at most an isolated singularity. This is done in \cref{non-isolate}.

Lastly we make a remark which relates the present work to balanced big CM modules.

\begin{remark}\label{BCM-remark}
Recall that a module $M$ over a commutative noetherian local ring $(R,\mm)$ is called \emph{balanced big CM} if every system of parameters of $R$ is an $M$-regular sequence (\cite[\S 8.5]{BH98}).
Let $R$ and $A$ be as in \cref{main-theorem} and define a balanced big CM right $A$-module as a right $A$-module which is balanced big CM over $R$. The author of the present paper will show in his subsequent work \cite{Nak21a} that each of \cref{fCM,GP,cCM} in \cref{main-theorem} is further equivalent to the following condition:
\begin{enumerate}[label=($*$)]
\item \label{star}\emph{Every indecomposable pure-injective balanced big CM right $A$-module is finitely generated.}
\end{enumerate}
In other words, $A$ has infinite CM representation type if and only if there exists an indecomposable pure-injective balanced big CM $A$-module that is infinitely generated.
\cref{main-theorem} is one of crucial steps to give this characterization. Moreover, we can regarded \cref{star} as a CM analogue to \cref{Artin}\cref{Artin-pure-inj}.

The present paper will not focus on balanced big CM modules, but we refer to some recent results about balanced big CM modules over orders.
For a complete CM local ring $(R,\mm)$, Bahlekeh, Fotouhi, and Salarian \cite[Theorems 6.7]{BFS19} showed that $R$ is of finite CM representation type  if and only if every balanced big CM module having an \emph{$\mm$-primary cohomological annihilator} (see \cite[p.~1675]{BFS19}) decomposes into a direct sums of finitely generated ones (i.e., small CM modules). Using this fact, they also recovered the equivalence \cref{fCM}$\Leftrightarrow$\cref{GP} of \cref{main-theorem} for a complete Gorenstein local ring due to Beligiannis; see 
\cite[Theorems 6.8]{BFS19}.
On the other hand, Psaroudakis and Rump proved \cite[Theorem 5.18]{PR22} that  an $R$-order $A$ for a complete regular local ring $R$ is of finite CM representation type if and only if  every \emph{accessible} balanced big CM $A$-module (see \cite[p. 20 and Definition 5.6]{PR22}) decomposes into a direct sum of finitely generated ones.
\end{remark}

\section{Preliminaries}\label{preliminaries}
In this section, we collect various facts to prove the main theorem (\cref{main-theorem}).

Throughout of this paper, a ring $A$ will always mean an associative ring with identity, and an $A$-module will mean a right $A$-module unless otherwise specified. Left $A$-modules are interpreted as right modules over the opposite ring $A^\op$.

\subsection{Gorenstein-projective modules}\label{subsect-Gproj}
Let $A$ be a ring.
We denote by $\Proj A$ (resp.~$\proj A$) the category of projective (resp.~finitely generated projective) $A$-modules.
A complex $X=(\cdots \to X^{i}\to X^{i+1}\to \cdots)$ of projective $A$-modules is said to be \emph{totally acyclic} if it is acyclic (i.e., $H^iX=0$ for all $i\in \ZZ$) and the complex $\Hom_A(X,P)$ is acyclic for any projective $A$-module $P$.
An $A$-module $M$ is said to be \emph{Gorenstein-projective} if there exists a totally acyclic complex $X$ of projective $A$-modules such that  $M=\Z^0X:=\Ker(X^0 \to X^{1})$. If this is the case, $X$ is called a \emph{complete resolution} of $M$.

Assume that $A$ is a (two-sided) coherent ring and $M$ is a Gorenstein-projective $A$-module that is finitely presented.  Then $M$ admits a complete resolution by finitely generated projective $A$-modules (see \cite[Proposition 10.2.6]{EJ11} and its preceding paragraphs).
Denote by $\GProj A$ (resp.~$\Gproj A$) the category of Gorenstein-projective (resp.~finitely generated Gorenstein-projective)  $A$-modules.
It is well known that $\GProj A$ (resp.~$\Gproj A$) is a Frobenius category whose projective-injective objects are the projective (resp.~finitely generated projective) $A$-modules; see, e.g., \cite[Proposition 3.1 and p. 207]{Che11} or \cite[Corollary 11.2.6]{Per16}.
Hence the stable category $\sGProj A$ (resp.~$\sGproj A$)  of $\GProj A$ (resp.~$\Gproj A$) are triangulated (\cite[Chapter I, Theorem 2.6]{Hap88}). Note that $\sGProj A$ (resp.~$\sGproj A$) is the stable category associated with $\GProj A$ (resp.~$\Gproj A$) in the sense of \cite[Chapter I, \S 2.2]{Hap88}.

Since every morphism from a finitely presented $A$-module to a projective $A$-module factors through a finitely generated projective $A$-module, there is a fully faithful functor $\sGproj A\to \sGProj A$ given by $M\mapsto M$.
Each (distinguished) triangle in $\sGProj A$ (resp.~$\sGproj A$) essentially arise from a short exact sequence in $\GProj A$ (resp.~$\Gproj A$) (\cite[\SS 2.5 and 2.7]{Hap88}), and so the canonical functor $\sGproj A\to \sGProj A$ is triangulated (\cite[\S 2.8]{Hap88}). Then we can regard $\sGproj A$ as a triangulated full subcategory of $\sGProj A$.

Denote by $\K_{\tac}(\Proj A)$ (resp.~$\K_{\tac}(\proj A)$) the homotopy category of totally acyclic complexes of projective (resp.~finitely generated projective) $A$-modules.
It is well known that taking complete resolutions by projective $A$-modules yields a triangulated equivalence $\sGProj A\isoto \K_{\tac}(\Proj A)$ whose quasi-inverse is induced by $\Z^0$ (cf. \cite[Theorem 4.4.1(1)]{Buc86} and \cite[Proposition 7.2]{Kra05}). This triangulated equivalence restricts to a triangulated equivalence $\sGproj A\isoto \K_{\tac}(\proj A)$ as $A$ is coherent.

Denote by $\K_{\ac}(\Proj A)$ (resp.~$\K_{\ac}(\proj A)$) the homotopy category of acyclic complexes of projective  (resp.~finitely generated projective) $A$-modules.
Assume that $A$ is \emph{Iwanaga--Gorenstein}, i.e., $A$ is a (two-sided) noetherian ring such that it has finite injective dimension as a left and ring $A$-module. 
This assumption implies that every projective $A$-module has finite injective dimension (\cite[Theorem 9.1.10]{EJ11}), so we have
$\K_{\tac}(\Proj A)=\K_{\ac}(\Proj A)$ and $\K_{\tac}(\proj A)=\K_{\ac}(\proj A)$ by a standard argument (cf. \cite[Example 7.15]{Kra05}, \cite[Corollary 4.28]{MS11}, and \cite[Lemma 4.3]{Che11}).
Consequently, using the canonical functors $\sGproj A\hr \sGProj A$ and $\K_{\ac}(\proj A)\hr \K_{\ac}(\Proj A)$, we obtain the following quasi-commutative diagram:
\begin{equation}\label{diagram}
\begin{tikzcd}
\K_{\ac}(\Proj A)\ar[r, "\Z^0"',"\sim"]&\sGProj A\\
\K_{\ac}(\proj A)\ar[r, "\Z^0"',"\sim"]\ar[u,hookrightarrow]&\sGproj A\ar[u,hookrightarrow]
\end{tikzcd}
\end{equation}

\begin{remark}\label{cp}
By J{\o}rgensen \cite[Theorem 2.4]{Jor05}, the homotopy category $\K(\Proj A)$ is compactly generated if $A$ is a coherent ring and  every flat right $A$-module has finite projective dimension.
More generally, Neeman \cite[Theorem 1.1]{Nee08} shows that $\K(\Proj A)$ is compactly generated whenever $A$ is a left coherent ring.
It is well known to experts that the compact generation of $\K(\Proj A)$ implies that $\K_\ac(\Proj A)$ is compactly generated; see  \cite[Corollary 5.17]{Mur07}.
\end{remark}

When $A$ is a Iwanaga--Gorenstein ring, every flat right $A$-module has finite projective dimension by \cite[Proposition~9.1.2]{EJ11}.
Hence the compact generation of $\K(\Proj A)$ can be deduced from \cite[Theorem 2.4]{Jor05} (see \cite[Lemma~4.2]{Che11}). 
Moreover, in this case, $\K_\ac(\Proj A)$ is triangulated equivalent to $\sGProj A$ as explained above. Therefore $\sGProj A$ is compactly generated. This fact is shown in \cite[Theorem 4.1]{Che11}; more precisely, it shows the following:

\begin{theorem}\label{compact}
Let $A$ be an Iwanaga--Gorenstein ring.
The triangulated category $\sGProj A$ is compactly generated, and each compact object in $\sGProj A$ is a direct summand of some object in the essential image of the canonical functor $\sGproj A\hookrightarrow \sGProj A$.
\end{theorem}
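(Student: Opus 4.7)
The plan is to transport the statement across the triangulated equivalence $\Z^0\colon \K_\ac(\Proj A)\isoto \sGProj A$ recorded in~\cref{diagram}, reducing everything to assertions about the homotopy category of projectives, where compact generation is already understood.

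First I would set up the compact generation. Since $A$ is Iwanaga--Gorenstein it is, in particular, two-sided noetherian (hence coherent), and by \cite[Proposition~9.1.2]{EJ11} every flat right $A$-module has finite projective dimension. Thus J{\o}rgensen's theorem \cite[Theorem~2.4]{Jor05} (or, more generally, Neeman's theorem \cite[Theorem~1.1]{Nee08}) guarantees that $\K(\Proj A)$ is compactly generated, and the standard argument recalled in \cite[Corollary~5.17]{Mur07} then yields the compact generation of $\K_\ac(\Proj A)$. Transporting along $\Z^0$ produces the compact generation of $\sGProj A$.

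Next I would exhibit an explicit set of compact generators sitting in the essential image of $\sGproj A\hr\sGProj A$. For each finitely generated Gorenstein-projective $A$-module $M$, coherence of $A$ supplies a complete resolution $P_M\in\K_\ac(\proj A)$ with $\Z^0 P_M\cong M$, and I plan to check that these $P_M$ are compact and generating in $\K_\ac(\Proj A)$. Compactness should follow from a degreewise finite-generation argument (any morphism out of $P_M$ to a coproduct factors through finitely many summands in each degree, and acyclicity of the target plus boundedness considerations let one glue these choices homotopically). Generation can be deduced from the corresponding property in $\K(\Proj A)$, where the compacts are controlled by $\K^{-,\bd}(\proj A)$, by brutal truncation and a mapping-cone construction reassembling an acyclic complex from its finitely generated pieces.

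Once compact generation by objects in the image of $\sGproj A$ is in hand, it is a formal fact about compactly generated triangulated categories that every compact object is a direct summand of an object built from the generators by finitely many triangles; since $\sGproj A$ is a triangulated subcategory of $\sGProj A$, such iterated cones remain in $\sGproj A$, yielding the second assertion. I expect the main obstacle to be the verification step in the previous paragraph, namely cleanly proving both compactness of $P_M$ in the acyclic subcategory and generation by the $P_M$'s; here the cleanest path is probably to invoke the explicit construction of \cite[Theorem~4.1]{Che11} or the Krause--Murfet analysis of $\K_\ac(\Proj A)$ rather than to redo the homotopy-theoretic bookkeeping from scratch.
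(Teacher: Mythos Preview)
Your proposal is correct and aligns with the paper's treatment: the paper does not supply an independent proof of this theorem but rather, in the paragraphs immediately preceding it, establishes compact generation of $\sGProj A$ via the equivalence $\K_\ac(\Proj A)\cong \sGProj A$ together with J{\o}rgensen \cite[Theorem~2.4]{Jor05} and \cite[Corollary~5.17]{Mur07}, and then cites \cite[Theorem~4.1]{Che11} for the full statement including the description of compact objects. Your outline follows exactly this route and, as you yourself note at the end, the cleanest path for the second assertion is to invoke \cite[Theorem~4.1]{Che11} rather than redo the bookkeeping; the paper simply takes that path without further comment.

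One small remark on the direct sketch you offer in the middle paragraph: the claim that each $P_M\in\K_\ac(\proj A)$ is compact in $\K_\ac(\Proj A)$ is true, but the degreewise argument you outline does not quite suffice on its own, since compactness in the acyclic subcategory is not the same as compactness in $\K(\Proj A)$ (indeed, the compacts of $\K(\Proj A)$ are described via $\K^{-,\bd}(\proj A)$, not via acyclic complexes). The correct way to see compactness of $P_M$ in $\K_\ac(\Proj A)$ is through the recollement/localization structure relating $\K_\ac(\Proj A)$, $\K(\Proj A)$, and the singularity category, which is precisely what Chen's argument in \cite{Che11} organizes. Since you flag this as the expected obstacle and land on the same citation, there is no genuine gap.
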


For completing the proof of the main theorem, the reader may assume that all rings in this paper are Iwanaga--Gorenstein, although we will not do this so that we can see what is essential in each result.

\subsection{Pure-injectivity and functors commuting with small direct products}
\label{PI-R}
A morphism $f:M\to N$ of right modules over a ring $A$ is said to be a \emph{pure monomorphism} if the induced morphism $L\otimes_A f$ is a monomorphism for every left $A$-module $L$, or equivalently, if $f$ induces a short exact sequence
$0\to \Hom_A(F,M)\to \Hom_A(F,N)\to \Hom_A(F,\Coker f)\to 0$ for every finitely presented $A$-module $F$ (see \cite[Theorem 6.4]{JL89}).
An $A$-module $P$ is said to be \emph{pure-injective} if the functor $\Hom_A(-,P)$ sends every pure monomorphism $M\to N$ of $A$-modules to an epimorphism $\Hom_A(N,P)\to \Hom_A(M,P)$.
Pure-injectivity of objects in a compactly generated triangulated category is defined analogously (\cite{Bel00}, \cite{Kra00}); see \cref{PointsZiegler}.

Let $\cA$ be an additive category, and assume that $\cA$ admits small direct sums and small direct products, i.e., for any family $\{X_i\}_{i\in I}$ of objects $X_i$ in $\cA$ with $I$ a small set, the direct sum $\bigoplus_{i\in I}X_i$ and the direct product $\prod_{i\in I}X_i$ exist in $\cA$.
Given an object $X\in \cA$ and a small set $I$, we write $X^{(I)}:=\bigoplus_{i\in I} X_i$ and $X^{I}:=\prod_{i\in I} X_i$, setting $X_i:=X$ for each $i\in I$.
The \emph{summation morphism} $X^{(I)}\to X$ is defined to be the morphism induced by the identity $X_i:=X\to X$ for each $i\in I$.

A module $M$ over a ring is pure-injective if and only if the summation morphism $M^{(I)}\to M$ factors through the canonical morphism $M^{(I)}\to M^{I}$ (see \cite[Theorem 7.1]{JL89}). The same characterization of pure-injectivity is available in a compactly generated triangulated category $\cT$; see \cref{injectives}.

Let $\cA$ be an additive category with small direct sums and small direct products, i.e., $\cA$ is an additive category that admits small direct sums and small direct products.
Following \cite[Definition 5.1(1)]{SS20a}, we say that an object $X\in \cA$ is \emph{pure-injective} if, for any small set $I$, the summation morphism $X^{(I)}\to X$ factors through the canonical morphism $X^{(I)}\to X^I$.

\begin{proposition}\label{pure-prod}
Let $\cA$ and $\cB$ be additive categories with small direct sums and small direct products. 
Let $G:\cA\to \cB$ be an additive functor that commutes with small direct products.
If $X$ is a pure-injective object in $\cA$, then $GX$ is pure-injective in $\cB$. 
\end{proposition}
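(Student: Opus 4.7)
The plan is to verify the defining factorization property of pure-injectivity for $GX$ directly from the one available for $X$. Fix a small set $I$; the goal is to factor the summation $\sigma_{GX}:(GX)^{(I)}\to GX$ through the canonical map $\kappa_{GX}:(GX)^{(I)}\to (GX)^I$. Pure-injectivity of $X$ supplies a morphism $h:X^I\to X$ with $h\circ\kappa_X=\sigma_X$. Since $G$ preserves products, there is a canonical isomorphism $\gamma:G(X^I)\isoto (GX)^I$, determined by the requirement that $\pi_k\circ\gamma=G(\pi_k^X)$ for every $k\in I$. I will take the desired factorization to be $k:=G(h)\circ\gamma^{-1}:(GX)^I\to GX$.

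To verify $k\circ\kappa_{GX}=\sigma_{GX}$, I would introduce the canonical comparison $\alpha:(GX)^{(I)}\to G(X^{(I)})$ induced by the family $\{G(\iota_i^X)\}_{i\in I}$ via the universal property of the coproduct; here $\iota_i^X:X\to X^{(I)}$ denotes the $i$-th coproduct inclusion. Note that $G$ is not assumed to preserve coproducts, so $\alpha$ need not be invertible — this is the mild subtlety of the argument. The two identities to establish are
\[
G(\sigma_X)\circ\alpha=\sigma_{GX},\qquad \gamma\circ G(\kappa_X)\circ\alpha=\kappa_{GX}.
\]
Both follow from routine manipulations: composing the first with the $j$-th coproduct inclusion $\iota_j:GX\to (GX)^{(I)}$ reduces, by the definition of $\alpha$ and functoriality of $G$, to $G(\sigma_X\circ\iota_j^X)=G(\id_X)=\id_{GX}$, which characterizes $\sigma_{GX}$. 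Composing the second additionally with the $k$-th product projection $\pi_k:(GX)^I\to GX$ reduces to $G(\pi_k^X\circ\kappa_X\circ\iota_j^X)$, which is $\id_{GX}$ if $j=k$ and $0$ otherwise, matching $\kappa_{GX}$ as characterized by $\pi_k\circ\kappa_{GX}\circ\iota_j=\delta_{jk}\cdot\id_{GX}$.

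Granting these two identities, the computation
\[
k\circ\kappa_{GX}=G(h)\circ\gamma^{-1}\circ\gamma\circ G(\kappa_X)\circ\alpha=G(h\circ\kappa_X)\circ\alpha=G(\sigma_X)\circ\alpha=\sigma_{GX}
\]
concludes the proof. The argument is entirely formal and I do not foresee any genuine obstacle; the only thing to keep track of is that one cannot invert the coproduct comparison $\alpha$, which is precisely why the two naturality identities must be checked by hand using the universal properties of products and coproducts, rather than by simply transporting the factorization $h\circ\kappa_X=\sigma_X$ across $G$.
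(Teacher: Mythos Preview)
Your proof is correct and follows essentially the same approach as the paper's: both introduce the coproduct comparison $\alpha:(GX)^{(I)}\to G(X^{(I)})$ (called $e$ in the paper), verify that precomposition with $\alpha$ identifies $G(\kappa_X)$ and $G(\sigma_X)$ with the canonical and summation morphisms for $GX$, and then transport the factorization $G(h)$ across. Your treatment is slightly more explicit in keeping track of the product isomorphism $\gamma$ and in spelling out the two identities via coproduct inclusions and product projections, whereas the paper simply identifies $G(X^I)$ with $(GX)^I$ and asserts the identities ``by construction''.
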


\begin{proof}
Let $X$ be a pure-injective object in $\cA$.
Let $I$ be a small set,  $c: X^{(I)}\to X^I$ the canonical morphism, and $s: X^{(I)}\to X$ the summation morphism.
Since $X$ is pure-injective, there exists a morphism $t:X^{I}\to X$ making the following diagram commutative:
\[
\begin{tikzcd}
X^{(I)}\ar[rr,"c"]\arrow[rd,"s"'] &&X^I\ar[ld,"t"]\\
&X
\end{tikzcd}
\]
Since $G$ commutes with small direct products, we have $G(X^I)=G(X)^I$.
For each $j\in I$, let $e_j:X_j\to \bigoplus_{i\in I}X_i=X^{(I)}$ be the canonical injection, where $X_i:=X$. 
Let $e: G(X)^{(I)}\to G(X^{(I)})$ be the morphism induced by $G(e_i): GX_i\to G(X^{(I)})$ for each $i\in I$.
Then we have the following commutative diagram:
\[
\begin{tikzcd}
 (GX)^{(I)}\ar[r,"e"] &G (X^{(I)})\ar[rr," G(c)"]\arrow[rd,"G(s)"'] &&G(X^I)=(GX)^I\ar[ld,"G(t)"]\\
&& GX
\end{tikzcd}
\]
By construction, the composition $G(c)\cdot e$ is the canonical morphism
$G(X)^{(I)}\to G(X)^{I}$, and the composition $G(s)\cdot e$ is the summation morphism $G(X)^{(I)}\to GX$.
Then the above diagram shows that $GX$ is pure-injective in $\cB$.
\end{proof}

\begin{corollary}\label{pure-right}
Let $\cA$ and $\cB$ be additive categories with small direct sums and small direct products. 
Let $G:\cA\to \cB$ be a right adjoint to some additive functor $\cB\to \cA$. If $X$ is a pure-injective object in $\cA$, then $GX$ is pure-injective in $\cB$. 
\end{corollary}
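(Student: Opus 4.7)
The plan is to reduce the claim directly to \cref{pure-prod}. That proposition already gives the conclusion whenever $G$ commutes with small direct products, so the only thing left to verify is that the hypothesis of being a right adjoint implies this preservation property.

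First I would recall the standard categorical fact that any right adjoint functor between additive categories preserves all small limits, and in particular small direct products. More concretely, if $F:\cB\to \cA$ is left adjoint to $G$, then for a family $\{X_i\}_{i\in I}$ in $\cA$ with $I$ a small set, one has natural isomorphisms
\[
\Hom_\cB(Y, G(X^I)) \;\cong\; \Hom_\cA(FY, X^I) \;\cong\; \prod_{i\in I}\Hom_\cA(FY, X_i) \;\cong\; \prod_{i\in I}\Hom_\cB(Y, GX_i),
\]
and by Yoneda the canonical comparison morphism $G(X^I)\to (GX)^I$ is an isomorphism. Hence $G$ commutes with small direct products.

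Having established this, the corollary is immediate: if $X\in\cA$ is pure-injective, then \cref{pure-prod} applied to the product-preserving functor $G$ shows that $GX\in\cB$ is pure-injective. No further construction of splittings is needed, because the relevant diagram chase has already been carried out in the proof of \cref{pure-prod}.

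There is no real obstacle here; the only subtlety is making sure that the preservation of products is invoked for arbitrary small index sets (not merely finite ones), which is automatic from the adjunction. The corollary is therefore essentially a one-line consequence of \cref{pure-prod} together with the fact that right adjoints preserve products.
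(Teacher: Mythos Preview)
Your proof is correct and matches the paper's own argument exactly: the paper simply notes that a right adjoint commutes with small direct products and then invokes \cref{pure-prod}. Your additional Yoneda verification of product preservation is a harmless elaboration of what the paper leaves implicit.
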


\begin{proof}
Since $G$ is a right adjoint, it commutes with small direct products, so the corollary follows from \cref{pure-prod}.
\end{proof}

We will use this corollary in the proof of \cref{non-isolate}.

\subsection{Injective modules and injective dimension over Noether algebras}\label{subsect-Noeth}
Let $R$ be a commutative noetherian ring. 
Let $A$ be a \emph{Noether $R$-algebra} $A$, that is, a ring $A$ together with a ring homomorphism $\varphi: R\to A$ such that  
the image $\varphi(R)$ is contained in the center of $A$ and $A$ is finitely generated as an $R$-module.
We call $\varphi$ the \emph{structure map} of $A$. We denote by $\Mod A$ (resp.~$\mod A$) the category of right $A$-modules (resp.~finitely generated right $A$-modules).
Given a prime ideal $\pp$ of $R$, we say that $M\in \Mod A$ is $\pp$\emph{-local} if the canonical $A$-homomorphism $M\to M_\pp:=M\otimes_R R_\pp$ is bijective.
Moreover, given an ideal $\fa$ of $R$, we say that $M\in \Mod A$ is $\fa$\emph{-torsion} if the canonical $A$-homomorphism $\Gamma_\fa M:=\varinjlim_{n\geq 1}\Hom_R(R/{\fa},M)\to M$ is bijective.

We denote by $\Spec A$ the set of (two-sided) prime ideals of $A$ .
For each $P\in \Spec A$, $\pp:=\varphi^{-1}(P)$ is a prime ideal of $R$, $P_\pp$ is a maximal ideal of the Noether $R_\pp$-algebra $A_\pp$, and $A_\pp/P_\pp$ decomposes into a finite direct sum of copies of a simple (right) $A_\pp$-module $S_A(P)$ (see \cref{ss-dec}).
We denote by $I_A(P)$ the injective envelope of $S_A(P)$ in $\Mod A$. 
Then $I_A(P)$ is indecomposable as an $A$-module, and it is $\pp$-local and $\pp$-torsion.
Moreover, there is a canonical bijection between $\Spec A$ and the set of isomorphism classes of indecomposable injective $A$-modules given by $P\mapsto I_A(P)$. See \cite[\S\S 2.3--2.4]{KN22} for more details.

Since $A$ is noetherian, every injective $A$-module $I$ into a direct sum of indecomposable injective $A$-modules. By the above-mentioned bijection, we have 
\begin{equation*}
 I\cong \bigoplus_{P\in\Spec A}I_{A}(P)^{(C_{P})}
	\end{equation*}
for some family of sets $\{C_{P}\}_{P\in\Spec A}$.
Then we have
\begin{equation}\label{indec-gamma-0}
		\Gamma_{\mathfrak{a}} I\cong \bigoplus_{\substack{P\in \Spec A\\ \mathfrak{a}\subseteq \varphi^{-1}(P)}}I_{A}(P)^{(C_{P})}
	\end{equation}
for an ideal $\fa$ of $R$; see \cite[Remark 3.4(1)]{KN22}.
If $R$ is local and $\mm$ is the maximal ideal of $R$, then
\begin{equation}\label{indec-gamma}
		\Gamma_\mm I\cong \bigoplus_{P\in \Max A}I_{A}(P)^{(C_{P})},
	\end{equation}
where $\Max A$ denotes the set of maximal ideals of $\Spec A$; see \cite[Lemma 2.12]{KN22}. Note that 
$\Max A$ is a finite set when $R$ is local; see \cite[Propositions  2.14(2) and 2.15]{KN22}.

We also have an isomorphism
\begin{equation*}\label{indec-localize-0}
		I\otimes_R S^{-1}R\cong \bigoplus_{\substack{P\in \Spec A\\ \varphi^{-1}(P)\cap S=\emptyset}}I_{A}(P)^{(C_{P})}
	\end{equation*}
for a multiplicatively closed subset $S$ of $R$; see \cite[Remark 3.4(2)]{KN22}. Hence if $S$ is the multiplicatively closed subset generated by an element $x\in R$, then
\begin{equation}\label{indec-localize}
		I_x\cong \bigoplus_{\substack{P\in \Spec A\\ x\notin \varphi^{-1}(P)}}I_{A}(P)^{(C_{P})},
	\end{equation}
where $M_x:=M \otimes_R S^{-1}R$ for an $A$-module $M$.

\begin{remark}\label{injective}
Let $(R,\mm)$ be a commutative noetherian local ring and $A$ a Noether $R$-algebra. Denote by $\rad A$ the Jacobson radical of $A$.
Let $M\in \Mod A$ and let $f: M\hr E_A(M)$ be an injective envelope in $\Mod A$.
We observe that the induced map \[\Hom_A(A/\rad A,f):\Hom_A(A/\rad A, M)\to\Hom_A(A/\rad A, E_A(M))\]
is an isomorphism. 
To see this, recall that $A/\rad A$ decomposes as
\begin{equation}\label{ss-dec}
A/\rad A\cong \bigoplus_{P\in \Max A} S_A(P)^{n_P}
\end{equation}
for some integers $n_P\geq 0$; see \cite[Proposition 2.19]{KN22}.
Thus it suffices to show that the inclusion 
$\Hom_A(S_A(P),f): \Hom_A(S_A(P), M)\hr \Hom_A(S_A(P), E_A(M))$ is surjective for each $P\in \Max A$.
However this is obvious because $f$ is an essential extension and $S_A(P)$ is a simple $A$-module.

Let $M$ be an $A$-module and let $E=(0\to E^0 \to E^1\to \cdots )$ be a minimal injective resolution of $M$. By the above observation, we see that the complex $\Hom_A(A/\rad A, E)$ has zero differential.
Moreover, if we write $E^i=\bigoplus_{P\in \Spec A} I_A(P)^{(C_P^i)}$ for each $i\geq 0$, then 
\begin{align*}
\Hom_A(A/\rad A, E^i)
&\cong \Hom_A(A/\rad A, \Gamma_\mm E^i)\\
&\cong \Hom_A(A/\rad A, \bigoplus_{P\in \Max A} I_A(P)^{(C_P^i)})\\
&\cong \bigoplus_{P\in \Max A} \Hom_A(A/\rad A, I_A(P)^{(C_P^i)})\\
&\cong \bigoplus_{P\in \Max A} S_A(P)^{(C_P^i)}
\end{align*}
in $\Mod A$, where the first isomorphism holds since $A/\rad A$ is $\mm$-torsion (see \cite[Lemma 2.12 and Remark 2.23]{KN22}), the second follows from \cref{indec-gamma}, and the last follows from \cite[Proposition 4.8(2)]{KN22}.
Therefore we see that $\Hom_A(A/\rad A, E^i)=0$ if and only if $\Gamma_\mm E^i=0$.

The isomorphism $\Hom_A(A/\rad A, E^i)\cong \bigoplus_{P\in \Max A} S_A(P)^{(C_P^i)}$ can be also deduced from \cite[Lemma 5.1(2)]{GN02}; its proof implicitly uses the fact that $\Hom_A(A/\rad A, E)$ has zero differential. 
\end{remark}

Given an $A$-module $M$, we denote by $\id_A M$ the injective dimension of $M$.

\begin{lemma}\label{id-test}
Let $R$ be a commutative noetherian local ring and $A$ a Noether $R$-algebra.
Let $M$ be a nonzero finitely generated $A$-module. 
Then $\id_A M=\sup \{i \mid \Ext^i_A(A/\rad A, M)\neq 0\}$.
\end{lemma}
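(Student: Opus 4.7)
The plan is to prove the two inequalities separately. The upper bound $\sup\{i \mid \Ext^i_A(A/\rad A, M) \neq 0\} \leq \id_A M$ is immediate from the definition of injective dimension, since $\Ext^i_A(N, M) = 0$ for every $A$-module $N$ whenever $i > \id_A M$. For the reverse direction, set $d := \sup\{i \mid \Ext^i_A(A/\rad A, M) \neq 0\}$; if $d = \infty$ there is nothing to prove, so I assume $d < \infty$ and aim to show $\id_A M \leq d$.

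First I would take a minimal injective resolution $E^\bullet$ of $M$ and write $E^i = \bigoplus_{P \in \Spec A} I_A(P)^{(C^i_P)}$. By \cref{injective} the complex $\Hom_A(A/\rad A, E^\bullet)$ has zero differentials, so
\[
\Ext^i_A(A/\rad A, M) \cong \Hom_A(A/\rad A, E^i) \cong \bigoplus_{P \in \Max A} S_A(P)^{(C^i_P)}.
\]
This identifies $d$ as $\sup\{i \mid C^i_P \neq 0 \text{ for some } P \in \Max A\}$, and so the target inequality $\id_A M \leq d$ reduces to establishing the vanishing $C^i_P = 0$ for every $P \in \Spec A$ (not only $P \in \Max A$) whenever $i > d$.

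The heart of the argument, and the main obstacle, is a Bass-type upward propagation of non-vanishing Bass numbers: if $P \in \Spec A$ satisfies $\pp := \varphi^{-1}(P) \subsetneq \mm$ and $C^i_P \neq 0$, then there exists $Q \in \Spec A$ with $\varphi^{-1}(Q) \supsetneq \pp$ and $C^{i+1}_Q \neq 0$. Granted this, iterating at most $\dim(R/\pp)$ times strictly advances the base prime and the homological degree, and since a strictly increasing chain in $\Spec R$ must terminate at an element of $\Max R$, one obtains a pair $(j, Q)$ with $Q \in \Max A$ and $j > d$ satisfying $C^j_Q \neq 0$, contradicting the previous paragraph.

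To establish the propagation I would localize at a prime $\mathfrak{q}$ of $R$ above $\pp$ with $\height(\mathfrak{q}/\pp) = 1$. Minimal injective resolutions behave well under localization (as can be read off from \cref{indec-localize}), so $(E^\bullet)_{\mathfrak{q}}$ is a minimal injective resolution of the finitely generated $A_{\mathfrak{q}}$-module $M_{\mathfrak{q}}$ whose Bass numbers at primes $P'$ with $\varphi^{-1}(P') \subseteq \mathfrak{q}$ agree with $C^i_{P'}$. After replacing $R, A, M$ by $R_{\mathfrak{q}}, A_{\mathfrak{q}}, M_{\mathfrak{q}}$, one picks $x \in \mathfrak{q} \setminus \pp$ and uses a suitable short exact sequence built from multiplication by $x$, together with the long exact sequence of $\Ext$, to move non-vanishing from homological degree $i$ at $\pp$ up to degree $i+1$ at $\mathfrak{q}$. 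The finite generation of $M$ is essential here: it ensures the relevant $\Ext$ groups are finitely generated $R_{\mathfrak{q}}$-modules, so that Nakayama's lemma detects non-vanishing after reducing modulo $\mathfrak{q}$ and makes the transported class survive.
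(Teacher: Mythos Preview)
The paper does not give its own proof of this lemma; it simply cites \cite[Corollary~3.5(3)]{GN02}.  Your outline is the standard Bass-type argument adapted to Noether algebras, and it is correct in structure.  Two places deserve more precision.  First, the ``suitable short exact sequence'' is most cleanly taken to be $0 \to A/P \xrightarrow{x} A/P \to A/(P+xA) \to 0$ (here $x$ is central and nonzero in the prime ring $A/P$, hence regular); one checks $\Ext^i_A(A/P,M)\neq 0$ by localising at $\pp$ so that $A_\pp/P_\pp$ becomes a sum of copies of $S_A(P)$, then Nakayama gives $\Ext^{i+1}_A(A/(P+xA),M)\neq 0$, and finally $A_\qq/(P+xA)_\qq$ has finite length (it is finitely generated over the artinian ring $R_\qq/(\pp+x)R_\qq$), so a composition series yields $C^{i+1}_Q\neq 0$ for some $Q$ with $\varphi^{-1}(Q)=\qq$.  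Second, the claim that localisation preserves \emph{minimality} of injective resolutions over Noether algebras is true but does not follow from \cref{indec-localize} alone; you need that localisation preserves essential extensions, or you can bypass the issue by defining the Bass number at $P$ intrinsically as the length of $\Ext^i_{A_\pp}(S_A(P),M_\pp)$ over $\End_{A_\pp}(S_A(P))$.
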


\begin{proof}
See \cite[Corollary 3.5(3)]{GN02}.
\end{proof}

\subsection{Local cohomology of modules over Noether algebras}\label{loc-coh}

Let $R$ be a commutative noetherian ring $R$ and 
denote by $\D(R)$ the unbounded derived category of $R$-modules.
Let $\fa$ be an ideal of $R$ and consider the right derived functor $\RGamma_\fa: \D(R)\to \D(R)$ of the $\fa$-torsion functor $\Gamma_\fa:\Mod R\to \Mod R$.
The functor $\H^i\RGamma_\fa: \D(R)\to \Mod R$ is denoted by $\H^i_\fa$ and called the \emph{$i$th local cohomology functor with respect to $\fa$}.

Let $\boldsymbol{x}=x_1,\ldots,x_n$ be a system of generators of $\fa$. 
For each $x_i$, consider the complex $(R \to R_{x_i})$ concentrated in degrees $0$ and $1$, where the morphism $R \to R_{x_i}$ is the localization map. 
The \emph{(extended) \v{C}ech complex with respect to} $\boldsymbol{x}$ is defined as 
$\check{C}(\boldsymbol{x}):=\bigotimes_{i=1}^n (R \to R_{x_i})$. 
For every $X\in \D(R)$, there is a natural isomorphism 
\begin{align}\label{Cech-comp}
\RGamma_{\fa}X\cong \check{C}(\boldsymbol{x}) \otimes_R X
\end{align}
in $\D(R)$; see \cite[Proposition 3.1.2]{Lip02}. 
In particular, given an $R$-module $M$, we have $\H^i_\fa M=0$ for every $i>n$ (\cite[Corollary 3.1.4]{Lip02}). 
If $\fa$ and $\fb$ are ideals of $R$ defining the same Zariski closed subset, we have $\Gamma_{\fa}=\Gamma_\fb$ as functors $\Mod R\to \Mod R$; see \cite[Exercise 1.1.3]{BS13} and \cite[p.~3]{Mat89}.

\begin{remark}\label{Groth-vanish}
Let $(R,\mm)$ be a commutative noetherian local ring and $d:=\dim R$, where $\dim R$ stands for the Krull dimension of $R$.  Let $\boldsymbol{x}=x_1,\ldots,x_d$ be a system of parameters of $R$, i.e., $R/(x_1,\ldots,x_d)$ is of finite length as an $R$-module; see \cite[\S 14]{Mat89}.
It is elementary that the Zariski closed subset defined by the ideal $(x_1,\ldots,x_d)$  is just $\{\mm\}$, so that $\Gamma_{\mm}=\Gamma_{(x_1,\ldots,x_d)}$.
Thus we have $\RGamma_{\mm}X\cong \check{C}(\boldsymbol{x}) \otimes_R X$ in $\D(R)$ by \cref{Cech-comp}.
Hence, given an $R$-module $M$, we have $\H^i_\mm M=0$ for every $i>\dim R$ (\cite[Theorem 6.1.2]{BS13}).
\end{remark}

Now, let $R$ be a commutative noetherian ring, $\fa$ an ideal of $R$, $\boldsymbol{x}$ a system of generators of $\fa$, and $A$ a Noether $R$-algebra.

\begin{proposition}\label{Cech-comp-2}
Given  a complex $I$ of injective $A$-modules, we have a canonical chain map
$\Gamma_\fa I\to \check{C}(\boldsymbol{x}) \otimes_R I$ over $A$ which is a homotopy equivalence. 
\end{proposition}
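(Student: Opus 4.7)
The plan is to construct the canonical chain map from the natural inclusion $R\hookrightarrow \check{C}(\boldsymbol{x})$ into cohomological degree zero, and then establish the homotopy equivalence by induction on the number $n$ of generators of $\fa$. Tensoring the inclusion with $I$ gives a chain map $I\cong R\otimes_R I\to \check{C}(\boldsymbol{x})\otimes_R I$, and restricting termwise to the subcomplex $\Gamma_\fa I\subseteq I$ yields the desired $\Gamma_\fa I\to \check{C}(\boldsymbol{x})\otimes_R I$, which is evidently natural in $I$.

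For the inductive step with $n\geq 2$, set $\fb:=(x_1,\dots,x_{n-1})$ and use the tensor factorization $\check{C}(\boldsymbol{x})\otimes_R I = \check{C}(x_n)\otimes_R\bigl(\check{C}(\fb)\otimes_R I\bigr)$ together with the iterated torsion identity $\Gamma_\fa = \Gamma_{(x_n)}\circ\Gamma_\fb$ (an element is $\fa$-torsion iff it is $x_i$-torsion for every $i$). The key verification is that $\check{C}(\fb)\otimes_R I$ is again a complex of injective $A$-modules: each of its terms is an iterated localization of some $I^q$, which by iterating \cref{indec-localize} is a direct sum of indecomposable injective $A$-modules, and such a direct sum is injective because $A$ is right noetherian. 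Applying the inductive hypothesis to $\Gamma_\fb I\to \check{C}(\fb)\otimes_R I$, pushing through the additive functor $\Gamma_{(x_n)}$, and then composing with the base-case map $\Gamma_{(x_n)}\bigl(\check{C}(\fb)\otimes_R I\bigr)\to \check{C}(x_n)\otimes_R\bigl(\check{C}(\fb)\otimes_R I\bigr)$ reduces everything to the case $n=1$.

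The main obstacle is the base case. For $x\in R$ and $I$ a complex of injective $A$-modules, $I_x^k$ is again injective by \cref{indec-localize}, so the short exact sequence $0\to \Gamma_{(x)}I\to I\to I_x\to 0$ is split in every degree. Fixing a termwise splitting $\sigma\colon I_x\to I$, I would write $I^k=\Gamma_{(x)}I^k\oplus\sigma(I_x^k)$; the differential of $I$ then takes the matrix form $(g,\sigma(m))\mapsto(d_\Gamma g+\alpha(m),\,\sigma(d_{I_x}m))$ for a chain map $\alpha\colon I_x\to \Gamma_{(x)}I[1]$ measuring the failure of $\sigma$ to commute with differentials. The naive projection $r$ of $I$ onto $\Gamma_{(x)}I$ along $\sigma$ does \emph{not} extend to a chain map on $\check{C}(x)\otimes_R I$ precisely because of $\alpha$, but the correction $h(y,z):=r(y)-\alpha(z)$ does, and a direct calculation shows that $h$ composed with the canonical inclusion is the identity on $\Gamma_{(x)}I$, while the other composition is homotopic to the identity via the explicit homotopy $s(y,z):=(\sigma(z),0)$.
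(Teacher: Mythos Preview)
Your proposal is correct and follows essentially the same approach as the paper: both argue by induction on the number of generators via the identity $\Gamma_\fa=\Gamma_{(x_n)}\cdots\Gamma_{(x_1)}$, and both reduce to the base case by using \cref{indec-localize} (and \cref{indec-gamma-0}) to see that the sequence $0\to\Gamma_{(x)}I\to I\to I_x\to 0$ is degreewise split. The paper simply asserts that this degreewise splitting yields a homotopy equivalence $\Gamma_{(x)}I\to\check{C}(x)\otimes_R I$, whereas you have spelled out the explicit homotopy inverse and homotopy; your computation is accurate and fills in exactly what the paper leaves to the reader.
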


\begin{proof}
By \cref{indec-gamma-0,indec-localize}, there is a degreewise split exact sequence
\[0\to \Gamma_{x_1} I\to I\to I_{x_1}\to 0\]
in the category of complexes of $A$-modules, where $\Gamma_{x_1}:=\Gamma_{(x_1)}$.
Then we obtain a canonical chain map $\Gamma_{x_1} I\to \check{C}(x_1)\otimes _RI$ over $A$ which is a homotopy equivalence.
An inductive argument yields a homotopy equivalence $\Gamma_{\fa}I\to \check{C}(\boldsymbol{x})\otimes_RI$ because $\Gamma_{\fa}=\Gamma_{x_n}\cdots\Gamma_{x_2}\Gamma_{x_1}$ as functors $\Mod A\to \Mod A$.
\end{proof}

Let $M$ be an $A$-module and $I$ an injective resolution of $M$ over $A$. Regarding $M$ and $I$ as objects in $\D(R)$, we have an isomorphism 
\begin{align}\label{A-resolution}
\RGamma_\fa M\cong \Gamma_\fa I
\end{align} in $\D(R)$ such that it induces an isomorphism $\H^i_\fa M\cong \H^i \Gamma_\fa I$ of $A$-modules for each $i\in \geq 0$. 
This fact, which is essentially proved in \cite[Corollary 2.7.2]{GN02}, follows from \cref{Cech-comp-2} along with \cref{Cech-comp}; cf. \cite[4.2.1]{BS13}.
The isomorphism \cref{A-resolution} will be important in the proof of \cref{idim}.

\subsection{Ideal-adic completion and projective modules over Noether algebras}\label{subsect-proj}

Let $R$ be a commutative noetherian ring and $\fa$ an ideal of $R$. 
Given a module $M$ over a Noether $R$-algebra $A$, we say that $M$ is $\fa$\emph{-complete} if the canonical map $M\to \Lambda^\fa M:=\varprojlim_{n\geq 1} M/\fa^n M$ is bijective.
When $R$ is local and $\mm$ is the maximal ideal, we say that $R$ is \emph{complete} if it is $\mm$-complete.

Let $(R,\mm)$ be a commutative noetherian local ring and $A$ a Noether $R$-algebra.
If $M\in \mod A$, there is a natural isomorphism 
$M\otimes_R \widehat{R}\cong \widehat{M}$ of right $A$-modules (see \cite[Theorem~8.7]{Mat89}), where $\Lambda^\mm:=\widehat{(-)}$.
Hence $\widehat{A}$ is finitely generated as an $\widehat{R}$-module. 
In addition, $\widehat{R}$ is a commutative noetherian (local) ring (see \cite[p.~63,~(4)]{Mat89} or \cite[Proposition~10.16]{AM69}).
Thus we can regard $\widehat{A}$ as a Noether $\widehat{R}$-algebra with structure map $\Lambda^\mm \varphi:\widehat{R}\to \widehat{A}$. 
We also have the canonical ring homomorphism $A\to \widehat{A}$. Note that $\widehat{A}$ is flat as a left and right $A$-module because $\widehat{R}$ is flat over $R$  (see \cite[Theorem~8.8]{Mat89}).

\begin{remark}\label{KS-PC}
Let $(R,\mm)$ be a commutative noetherian local ring and $A$ a Noether $R$-algebra.
Let $k$ be the residue field $R/\mm$.
By tensor-hom adjunction, $M^\vee:=\Hom_R(M,E_R(k))$ is a pure-injective left (resp. right) $A$-module for any right (resp. left) $A$-module $M$.

Assume $R$ is complete. Notice that then every finitely generated $A$-module is $\mm$-complete. Hence, for every finitely generated right $A$-module $M$, the canonical $A$-homomorphism $M\to M^{\vee\vee}$ is an isomorphism by Matlis duality (\cite[Theorem 3.2.13]{BH98}). Therefore $M\cong \Hom_R(M^{\vee},E_R(R/\mm))$ is pure-injective as a right $A$-module. 
It follows that every indecomposable finitely generated $A$-module has local endomorphism ring by \cite[Theorem 4.3.43]{Pre09} (see also the proof of \cite[Theorem 1.8]{LW12}).
As a consequence, $\mod A$ is a Krull--Schmidt category (see \cite[\S 4]{Kra15}). 
\end{remark}

\begin{lemma}\label{Warfield}
Let $R$ be a commutative noetherian complete local ring and $A$ a Noether $R$-algebra. Then every projective $A$-module is a direct sum of indecomposable (finitely generated) projective right $A$-modules.
\end{lemma}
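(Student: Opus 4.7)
The plan is to reduce the lemma to Warfield's classical theorem on direct sum decompositions of modules with local endomorphism rings. By \cref{KS-PC}, since $R$ is complete, every indecomposable finitely generated right $A$-module has local endomorphism ring, and in particular $\mod A$ is Krull--Schmidt. Applying this to the finitely generated projective $A$-module $A$ itself, one obtains a decomposition $A = P_1 \oplus \cdots \oplus P_n$, where each $P_i$ is an indecomposable finitely generated projective right $A$-module with $\End_A(P_i)$ local.

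Consequently, every free right $A$-module $A^{(I)}$ is isomorphic to $\bigoplus_{i=1}^{n} P_i^{(I)}$, which is a direct sum of modules with local endomorphism rings. Since every projective right $A$-module $P$ is a direct summand of some free $A$-module, $P$ is itself a direct summand of a direct sum of modules with local endomorphism rings. The concluding step is to invoke Warfield's theorem (the ``infinite Krull--Schmidt theorem''): any direct summand of a direct sum of modules with local endomorphism rings is again a direct sum of modules with local endomorphism rings, each isomorphic to one of the original summands. This immediately gives $P \cong \bigoplus_{j \in J} P_{i_j}$ with each $P_{i_j} \in \{P_1, \ldots, P_n\}$, establishing the lemma.

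There is no substantive obstacle in this outline: the proof amounts to combining two well-established results. The only non-trivial input is the Krull--Schmidt property of $\mod A$, and this has already been furnished in \cref{KS-PC} through Matlis duality. The rest is a textbook invocation of Warfield's theorem. If anything had to be emphasized as the pivotal point, it would be the necessity of completeness of $R$, which is used precisely to ensure that Matlis duality applies so that the endomorphism rings $\End_A(P_i)$ are local.
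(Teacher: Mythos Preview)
Your proof is correct and follows essentially the same approach as the paper's own proof: both decompose $A$ into indecomposable projectives $P_1,\ldots,P_n$ with local endomorphism rings via \cref{KS-PC}, realize an arbitrary projective as a summand of $A^{(I)}=\bigoplus_i P_i^{(I)}$, and then apply Warfield's theorem \cite[Theorem~1]{War69} to conclude.
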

\begin{proof}
Let $F$ be an arbitrary projective $A$-module. Then $F$ is a direct summand of some free $A$-module $A^{(I)}$. 
By \cref{KS-PC}, $A$ decomposes into a finite direct sum $\bigoplus_{1\leq i\leq n} P_i$ of indecomposable projective $A$-modules $P_i$ with local endomorphism ring. 
Thus $A^{(I)}= (\bigoplus_{1\leq i\leq n} P_i)^{(I)}$. It follows from \cite[Theorem 1]{War69} (see also \cite[Theorem E.1.26]{Pre09}) that $F$ is isomorphic to a direct sum of copies of $P_i$ for various $i$ with $1\leq i\leq n$.
\end{proof}

In fact, if $A$ is as in \cref{Warfield}, there is an isomorphism 
$A\cong \bigoplus_{P\in \Max A} T_A(P)^{n_P}$ with $T_A(P):=I_{A^\op}(P)^\vee$, where $n_P$ is the number appearing in \cref{ss-dec}; see \cite[Proposition 5.2]{KN22}.

\subsection{Complexes of flat modules}\label{subsect-flat}
Let $A$ be a left coherent ring for a while.
Then $\K(\Proj A)$ is compactly generated (\cref{cp}). 
Since the inclusion functor $\iota: \K(\Proj A)\to \K(\Flat A)$ commutes with small direct sums, it has a right adjoint 
\[\gamma: \K(\Flat A)\to \K(\Proj A)\]
 by the Brown representation theorem; see, e.g., \cite[Theorem 5.1.1]{Kra10}.
It follows from \cite[Facts 2.14]{Nee08} and a standard argument of Bousfield localization (see, e.g., \cite[Proposition 4.9.1]{Kra10}) that the counit morphism $\iota \gamma X \to X$ for each $X\in \K(\Flat A)$ induces a triangle
\begin{equation}\label{localization}
\iota \gamma X \to X\to Y\to \Sigma \iota \gamma X
\end{equation}
in $\K(\Flat A)$
such that $Y$ is a pure acyclic complex of flat right $A$-modules. Recall that a complex $Z$ of right $A$-modules is said to be \emph{pure acyclic} if $Z\otimes_AL$ is acyclic for any left $A$-module $L$.
If $X$ is a complex $X$ of flat right $A$-modules, then it is pure acyclic if and only if all the cycle modules $\Z^iX=\Ker(X^i \to X^{i+1})$ are flat.  Moreover, $\gamma X=0$ (i.e., $\iota \gamma X=0$) if and only if $X$ is pure acyclic by \cref{localization}; see \cite[Propositions 4.9.1, 4.10.1, and 4.12.1]{Kra10}.

Now let $R$ be a commutative noetherian ring and $A$ Noether $R$-algebra with structure map $\varphi: R\to A$. 
Let  $\pp\in \Spec R$. 
We write $\widehat{R_\pp}:=\Lambda^\pp (R_\pp)$ and  $\widehat{A_\pp}:=\Lambda^\pp (A_\pp)$.
Recall that the canonical ring homomorphism $R\to R_\pp \to \widehat{R_\pp}$ is flat, and there is a natural isomorphism $\widehat{R_\pp}\otimes_{R_\pp} A_\pp \cong \widehat{A_\pp}$.
Moreover, $\widehat{A_\pp}$ is a Noether $\widehat{R_\pp}$-algebra with structure map $\Lambda^\pp (\varphi \otimes_R R_\pp):\widehat{R_\pp}\to \widehat{A_\pp}$.

Let $\psi: A\to \widehat{A_\pp}$ be the canonical ring homomorphism.
We remark that $\widehat{A_\pp}$ is flat as a left and right $A$-module.
Consider the triangulated functor 
\[-\otimes_A \widehat{A_\pp}:\K_{\ac}(\Proj A)\to \K_{\ac}(\Proj \widehat{A_\pp}).\]
This commutes with small direct sums, and $\K_{\ac}(\Proj \widehat{A_\pp})$ is compactly generated (\cref{cp}). Hence the functor has a right adjoint 
\[\rho:\K_{\ac}(\Proj \widehat{A_\pp})\to \K_{\ac}(\Proj A)\]
by the Brown representation theorem. 

\begin{lemma}\label{pure-acyclic}
Let $X$ and $Y$ be acyclic complexes of finitely generated projective $ \widehat{A_\pp}$-modules. 
Then the canonical homomorphism
\[\Hom_{\K(\Proj \widehat{A_\pp})}(X,Y)\to \Hom_{\K(\Proj A)}(\rho X,\rho Y)\]
is bijective.
\end{lemma}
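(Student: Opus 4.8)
The plan is to exploit the adjunction $(-\otimes_A\widehat{A_\pp})\dashv\rho$ together with the fact that $\rho$ is fully faithful on a suitable subcategory. More precisely, for any $Y\in\K_{\ac}(\Proj\widehat{A_\pp})$ the counit $\varepsilon_Y\colon(\rho Y)\otimes_A\widehat{A_\pp}\to Y$ is a morphism in $\K_{\ac}(\Proj\widehat{A_\pp})$, and the claim is equivalent to showing that $\varepsilon_Y$ is an isomorphism in $\K(\Proj\widehat{A_\pp})$ when $Y$ is a bounded-below-free, i.e. an acyclic complex of finitely generated projective $\widehat{A_\pp}$-modules (equivalently, when $Y$ lies in the essential image of $\K_{\ac}(\proj\widehat{A_\pp})\hookrightarrow\K_{\ac}(\Proj\widehat{A_\pp})$). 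Indeed, if $\varepsilon_Y$ is invertible for such $Y$, then for $X$ also acyclic of finitely generated projectives we get
\[
\Hom_{\K(\Proj A)}(\rho X,\rho Y)\cong\Hom_{\K(\Proj\widehat{A_\pp})}((\rho X)\otimes_A\widehat{A_\pp},Y)\xleftarrow{\ \varepsilon_X^*\ }\Hom_{\K(\Proj\widehat{A_\pp})}(X,Y),
\]
and the first isomorphism is the adjunction while the second is an isomorphism because $\varepsilon_X$ is; the composite is exactly the canonical map in the statement (one checks this by the standard unit–counit bookkeeping, since the canonical map is $f\mapsto\rho(f)\circ(\text{unit})$ and its inverse is induced by $\varepsilon$).

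First I would reduce to checking $\varepsilon_Y$ on compact generators. Since $\widehat{A_\pp}$ is flat over $A$, the functor $-\otimes_A\widehat{A_\pp}$ sends compact objects of $\K_{\ac}(\Proj A)$ to compact objects of $\K_{\ac}(\Proj\widehat{A_\pp})$, and by \cref{compact} (together with the equivalence $\sGproj\widehat{A_\pp}\simeq\K_{\ac}(\proj\widehat{A_\pp})$) the compact objects of $\K_{\ac}(\Proj\widehat{A_\pp})$ are, up to summands, precisely the acyclic complexes of finitely generated projectives. So it suffices to prove: for $X$ a bounded complex of finitely generated projective $A$-modules viewed in $\K_{\ac}(\Proj A)$ — more precisely for $X$ in a generating set of compacts — the map $\Hom_{\K}(X\otimes_A\widehat{A_\pp},Y)\to\Hom_{\K}(\rho(X\otimes_A\widehat{A_\pp}),\rho Y)$ is bijective, and for this I want $\rho$ fully faithful on the essential image of $-\otimes_A\widehat{A_\pp}$ restricted to compacts. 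The key computation is therefore: for $X$ a finitely generated projective $A$-module (in a single degree, totally acyclic envelope) and $Y$ acyclic of finitely generated projective $\widehat{A_\pp}$-modules,
\[
\Hom_{\K(\Proj A)}(\rho(X\otimes_A\widehat{A_\pp}),\rho Y)\cong\Hom_{\K(\Proj\widehat{A_\pp})}(X\otimes_A\widehat{A_\pp},Y),
\]
which follows once we know $\rho$ is fully faithful there.

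To establish that, I would use the triangle defining $\rho$ coming from Bousfield localization: for any $Z\in\K_{\ac}(\Proj\widehat{A_\pp})$ there is a triangle $\rho(Z)\otimes_A\widehat{A_\pp}\to Z\to W\to$, and by the general nonsense of \cref{localization}-type arguments (the same mechanism used for $\gamma$ in \cref{subsect-flat}) $W$ is annihilated by $-\otimes_A\widehat{A_\pp}$, hence pure acyclic over $\widehat{A_\pp}$ (because $W\otimes_{\widehat{A_\pp}}L=W\otimes_A\widehat{A_\pp}\otimes_{\widehat{A_\pp}}L$ for any left $\widehat{A_\pp}$-module $L$). Now I invoke the crucial input that a pure acyclic complex of flat modules is right-orthogonal to complexes of \emph{projectives}: $\Hom_{\K(\Flat\widehat{A_\pp})}(P,W)=0$ for $P\in\K(\Proj\widehat{A_\pp})$, since $\gamma W=0$. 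Combined with the analogous statement over $A$ for $\rho X$ (any $\rho Z$ is a complex of projective $A$-modules, and the extra purely-acyclic error term over $\widehat{A_\pp}$ dies after applying $\Hom$ from something projective), this forces $\varepsilon_Y\colon(\rho Y)\otimes_A\widehat{A_\pp}\to Y$ to be an isomorphism for $Y$ bounded-below of finitely generated projectives — one just needs that $Y$ itself is built (degreewise, hence by the usual filtration/hom-vanishing argument) as a complex of projectives over $\widehat{A_\pp}$, which it is. The main obstacle, and the step I'd spend the most care on, is verifying that the error term $W$ is genuinely pure acyclic over $\widehat{A_\pp}$ — i.e. that the localization triangle produced by the Brown representability adjunction $(-\otimes_A\widehat{A_\pp})\dashv\rho$ really has its "kernel part" supported on pure acyclics, exactly as in \cref{localization}; this requires identifying the relevant generating set of compacts and checking the hom-vanishing $\Hom_{\K(\Proj\widehat{A_\pp})}(K\otimes_A\widehat{A_\pp},W)=0$ for $K$ compact in $\K_{\ac}(\Proj A)$, which I would deduce from adjunction ($\cong\Hom(K,\rho W)$) and the fact that $\rho W=0$ by construction. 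Everything else is a routine chase through the unit–counit identities.
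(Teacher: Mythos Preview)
Your argument has a genuine circularity. You want to show the counit $\varepsilon_Y\colon(\rho Y)\otimes_A\widehat{A_\pp}\to Y$ is an isomorphism by completing to a triangle with cone $W$ and proving $W=0$. Your proposed route is: show $\Hom(K\otimes_A\widehat{A_\pp},W)=0$ for compact $K$, which by adjunction is $\Hom(K,\rho W)$, and then claim $\rho W=0$ ``by construction.'' But $\rho W=0$ is \emph{not} automatic. For a general adjunction $L\dashv\rho$, applying $\rho$ to the counit triangle $L\rho Y\to Y\to W$ only tells you that $\rho\varepsilon_Y$ is a split epimorphism (via the triangle identity), not that $\rho W$ vanishes. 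The vanishing $\rho W=0$ is equivalent to $\rho\varepsilon_Y$ being an isomorphism, which is essentially full faithfulness of $\rho$ on the relevant subcategory---precisely what you are trying to prove. The analogy with the Bousfield triangle \cref{localization} is misleading: there, $\iota$ is fully faithful by inspection, which is why the cone lies in the kernel of $\gamma$; no such input is available for $(-\otimes_A\widehat{A_\pp})\dashv\rho$. Moreover, even granting $\Hom(K\otimes_A\widehat{A_\pp},W)=0$ for all compact $K$, this only places $W$ in the right orthogonal of the image of $L$, not in the class of pure acyclic complexes.

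The paper's argument is quite different and uses the finite-generation hypothesis in a way yours does not. It first identifies $\rho$ explicitly as the composite $\gamma\psi_*$, where $\psi_*$ is scalar restriction along $A\to\widehat{A_\pp}$ landing in $\K_{\ac}(\Flat A)$ and $\gamma$ is the right adjoint to $\K(\Proj A)\hookrightarrow\K(\Flat A)$. The two key facts are then: (i) for $X,Y\in\K(\proj\widehat{A_\pp})$ one has $\Hom_{\widehat{A_\pp}}(M,N)=\Hom_A(M,N)$ degreewise (using that finitely generated $\widehat{A_\pp}$-modules are $\pp$-local and $\pp$-complete), so $\psi_*$ is fully faithful there; and (ii) $\psi_*Y$ is a complex of flat \emph{cotorsion} $A$-modules (again using finite generation of the terms of $Y$), so the pure-acyclic cone of $\iota\gamma\psi_*X\to\psi_*X$ is left-orthogonal to $\psi_*Y$ by \cite[Theorem 1.3]{BCIE20}. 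Your approach never invokes cotorsionness, which is the real reason the finitely-generated hypothesis on $Y$ matters. (Incidentally, your description of $Y$ as ``bounded-below-free'' is off: acyclic complexes of projectives are unbounded unless contractible.)
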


To prove this lemma, we make a remark.

\begin{remark}\label{pure-remark}
Since $\widehat{A_\pp}$ is flat as a left and right $A$-module, we have the following triangulated functors:
\begin{align*}
\psi^*=-\otimes_A \widehat{A_\pp}&:\K_\ac(\Flat A)\to \K_\ac(\Flat \widehat{A_\pp}),\\
\psi_*=\Hom_{\widehat{A_\pp}}(\widehat{A_\pp},-)&: \K_\ac(\Flat \widehat{A_\pp})\to \K_\ac(\Flat A).
\end{align*}
Notice that the scalar restriction functor $\psi_*$ is a right adjoint to the scalar extension functor $\psi^*$.
Let $Y\in \K_\ac(\Proj A)\subseteq \K_\ac(\Flat A)$ and $Z\in \K_\ac(\Proj \widehat{A_\pp})\subseteq \K_\ac(\Flat \widehat{A_\pp})$.
Then we have natural isomorphisms
\begin{align*}
\Hom_{\K(\Proj \widehat{A_\pp})}(Y\otimes_A \widehat{A_\pp}, Z)&=\Hom_{\K(\Flat \widehat{A_\pp})}(\psi^*Y, Z)\\
&\cong \Hom_{\K(\Flat A)}(Y,\psi_*Z)\\
&= \Hom_{\K(\Flat A)}(\iota Y,\psi_*Z)\\
&\cong\Hom_{\K(\Proj A)}(Y,\gamma\psi_*Z).
\end{align*}
where $\gamma\psi_*Z\in \K_{\ac}(\Proj A)$ by \cref{localization}.
By these isomorphisms, we see that
the composition
\[\K_{\ac}(\Proj \widehat{A_\pp})\xrightarrow{\subseteq}\K_{\ac}(\Flat \widehat{A_\pp})\xrightarrow{\psi_*}\K_{\ac}(\Flat A) \xrightarrow{\gamma}\K_{\ac}(\Proj A)\]
is a right adjoint to the functor
$-\otimes_A \widehat{A_\pp}:\K_{\ac}(\Proj A)\to \K_{\ac}(\Proj \widehat{A_\pp})$.
Hence we may interpret the above composition as $\rho$.

Let us further observe that 
the canonical homomorphism
\[\Hom_{\K(\Proj \widehat{A_\pp})}(X,Y)=\Hom_{\K(\Flat \widehat{A_\pp})}(X,Y)\to \Hom_{\K(\Flat A)}(\psi_* X,\psi_* Y)\]
 is bijective for $X, Y\in \K(\proj \widehat{A_\pp})$. It suffices to check that $\Hom_{\widehat{A_\pp}}(M,N)= \Hom_{A}(M,N)$ for $M, N\in \proj \widehat{A_\pp}$.
Since $M$ and $N$ are $\pp$-local $A$-modules, it easily follows that 
 $\Hom_{A_\pp}(M,N)=\Hom_A(M,N)$ (see, e.g., \cite[Remark 2.13]{KN22}).
In addition, since $M$ and $N$ are finitely generated as $\widehat{R_\pp}$-modules, these are $\pp$-complete. Then we have $\Hom_{\widehat{A_\pp}}(M,N)= \Hom_{A_\pp}(M,N)$; see \cite[Proposition A.14]{KN22} and its proof.
Therefore the desired equality $\Hom_{\widehat{A_\pp}}(M,N)= \Hom_{A}(M,N)$ follows.
\end{remark}

\begin{proof}[Proof of \cref{pure-acyclic}]
By \cref{pure-remark}, it suffices to show that 
the canonical homomorphism
\[\Hom_{\K(\Proj \widehat{A_\pp})}(X,Y)\to \Hom_{\K(\Proj A)}(\gamma \psi_* X,\gamma \psi_* Y)\]
is bijective. Since $X$ $Y$ are complexes of finitely generated projective $ \widehat{A_\pp}$-modules, 
the canonical homomorphism
$\Hom_{\K(\Proj \widehat{A_\pp})}(X,Y)\to \Hom_{\K(\Proj A)}(\psi_* X,\psi_* Y)$
is bijective by the remark again.
Hence it remains to show that 
the canonical homomorphism
\begin{align}\label{can-hom}
\Hom_{\K(\Flat A)}(\psi_* X,\psi_* Y)\to \Hom_{\K(\Proj A)}(\gamma\psi_* X,\gamma\psi_* Y)
\end{align}
is bijective.
By the adjoint pair $(\iota, \gamma)$,
there is a natural isomorphism \[\Hom_{\K(\Proj A)}(\gamma\psi_* X,\gamma\psi_* Y)\cong \Hom_{\K(\Flat A)}(\iota\gamma\psi_* X,\psi_* Y).\]
The composition of this isomorphism with \cref{can-hom} is the homomorphism
\begin{align}\label{can-hom-2}
\Hom_{\K(\Proj A)}(\psi_* X,\psi_* Y)\to \Hom_{\K(\Flat A)}(\iota\gamma\psi_* X,\psi_* Y)
\end{align}
induced by the counit morphism $\iota\gamma\psi_* X\to \psi_*  X$ (see, e.g., \cite[(1.5.6)]{KS06}).
Thus it suffice to show that \cref{can-hom-2} is bijective.
Since the mapping cone of the counit morphism $\iota\gamma\psi_* X\to \psi_*  X$ is pure acyclic by \cref{localization}, the proof will be completed once we show
$\Hom_{\K(\Flat A)}(Z,\psi_* Y)=0$
for every pure acyclic complex $Z$ of flat $A$-modules.
The complex $Y$ consists of finitely generated projective $\widehat{A_\pp}$-modules, so $\psi_*Y$ is a complex of flat cotorsion $A$-modules; see \cite[Proposition 5.2 and Theorem 5.6]{KN22}.
Then we have $\Hom_{\K(\Flat A)}(Z,\psi_* Y)=0$ by \cite[Theorem 1.3]{BCIE20}.
\end{proof}

The last equality in the proof can be also deduced by a similar argument to \cite[Remark 3.2]{NT20} (see also \cite[Proposition A.9]{KN22}). \cref{pure-acyclic} will be used in the proof of \cref{non-isolate} below.

For the reader's sake, let us mentioin that $\K(\Proj A)$ is well generated for any ring $A$ by Neeman  \cite[Theorem 1.1]{Nee08}, so the functor $\gamma: \K(\Flat A)\to \K(\Proj A)$ and the triangle \cref{localization} for each $X\in \K(\Flat A)$ exist.
Moreover, by \cite[Theorem 1.3]{BCIE20} and \cref{localization}, the canonical homomorphism
$\Hom_{\K(\Flat A)}(X,Y)\to \Hom_{\K(\Proj A)}(\iota \gamma X,Y)\cong \Hom_{\K(\Proj A)}(\gamma X,\gamma Y)$ is bijective whenever $X$ and $Y$ are complexes of flat cotorsion modules. In fact, $\gamma$ induces a triangulated equivalence from the homotopy category of complexes of flat cotorsion modules to the homotopy category of complexes of projective modules; see \cite[Theorem 1.2]{Nee08} and \cite[Remark A.9]{NT20}.

\subsection{Gorenstein orders and maximal Cohen--Macaulay modules}
\label{subset-Gorder}
Let $(R,\mm,k)$ be a commutative noetherian local ring. 
The \emph{depth} of an $R$-module $M$ is defined as
\[\depth_RM:=\inf\{i \mid \Ext^i_R(k,M)\neq 0\}.\]
By definition, $\{i \mid \Ext^i_R(k,M)\neq 0\}=\emptyset$ if and only if $\depth_RM=\infty$. Moreover, we have 
$\depth_RM=\inf\{i \mid \H^i_\mm M\neq 0\}$ (see \cite[Theorems 9.1]{ILL+07}). Since $\H^i_\mm M=0$ for $i>\dim R$ (\cref{Groth-vanish}), it holds that $\depth_{R}M\leq \dim R$ whenever $\depth_RM<\infty$.
In particular, the inequality $\depth_RM\leq \dim R$ holds for any non-zero finitely generated $R$-module $M$ (see \cite[Definition 1.46 and Theorem 9.3]{ILL+07}).

As in the introduction, we abbreviate Cohen--Macaulay to CM.
A \emph{maximal CM $R$-module} is a finitely generated $R$-module $M$ such that 
\[\depth_{R}M\geq \dim R.\]
Traditionally, maximal CM modules are assumed to be non-zero, but our definition allows the zero module to be maximal CM.
This convention is often used in the literature.
If a maximal Cohen--Macaulay $R$-module is non-zero, we call it \emph{small CM}.
The terminology comes from Hochster's ``small CM modules conjecture'' \cite[Conjecture (6) in p. 10]{Hoc75}.
Our usage of maximal CM modules and small CM modules is the same as Holm \cite{Hol17}.

Let $(R,\mm)$ be a CM local ring with a canonical module, that is, $R$ is a commutative noetherian local ring being small CM as an $R$-module and there exists a small CM $R$-module $\omega_R$ such that $\omega_R$ is a dualizing complex for $R$ (cf. \cite[Definition 3.3.1]{BH98} and \cite[Chapter V, Proposition 3.4]{Har66}).
The injective dimension of $\omega_R$ is $d:=\dim R$, and if $I$ is a minimal injective resolution of $\omega_R$, then $I^d\cong E_R(k)$; see, e.g., \cite[Theorem 3.1.17]{BH98} and \cite[Chapter V, Proposition 6.1]{Har66}. A canonical module is unique up to isomorphism (\cite[
Theorem 3.3.4]{BH98}.)

An \emph{$R$-order} is a Noether $R$-algebra $A$ such that $A$ is small CM as an $R$-module.
Given an $R$-order $A$,  we call a finitely generated $A$-module \emph{maximal CM} (resp.~\emph{small CM}) if it is maximal CM (resp.~small CM) as an $R$-module. We denote by $\CM A$ the category of maximal CM $A$-modules. 
There is an exact duality 
\begin{align}\label{duality}
\Hom_R(-,\omega_R):(\CM A)^\op\isoto \CM A^\op.
\end{align}
See \cite[Corollary 1.13]{Yos90} and \cite[p. 539]{IW14}.

Let $A$ be an $R$-order, and set $\omega_{A}:=\Hom_R(A,\omega_R)$, which is an $(A,A)$-bimodule  whose injective dimension is $d$ as a left and right $A$-module. 
We have
\begin{align*}
\CM A=&\{M\in \mod A \mid \Ext^i_R(M,\omega_R)=0~\forall i>0\}\\
=&\{M\in \mod A \mid \Ext^i_A(M,\omega_A)=0~\forall i>0\},
\end{align*}
where the first equality follows from local duality \cite[Theorem 3.5.8]{BH98} (see also \cite[Theorem 3.3.7]{BH98}).
Given an $A$-module $M$, we denote by $\add_A (M)$ the full subcategory of $\Mod A$ formed by  summands of finite direct sums of copies of $M$.
An $R$-order $A$ is called \emph{Gorenstein} if it satisfies one of the following equivalent conditions (\cite[Lemma 2.15]{IW14}):
\begin{enumerate}[label=(\roman*), font=\normalfont]
\item\label{right} $\omega_A$ is projective as a right $A$-module.
\item \label{left}  $\omega_A$ is projective as a left $A$-module.
\item \label{add} $\add_A (\omega_A)=\add_A (A)$.
\item \label{add-op} $\add_{A^\op} (\omega_{A})=\add_{A^\op} (A^\op)$.
\end{enumerate}
By definition, the injective dimension of a Gorenstein $R$-order $A$ is $d$ as a left and right $A$-module. In particular, $A$ is Iwanaga--Gorenstein (\cite[Definition~9.1.1]{EJ11}).
Moreover, $A_\pp$ is a Gorenstein $R_\pp$-order for each $\pp\in \Spec R$, and $\widehat{A_\pp}$ is a Gorenstein $\widehat{R_\pp}$-order; these facts follow from \cite[Theorems~7.11~and~17.3]{Mat89} and \cite[Corollary~2.1.8~and~Theorem~3.3.5]{BH98}.
Note that $R$ is Gorenstein as an $R$-order if and only if $R$ is a Gorenstein local ring (i.e., a commutative noetherian local ring of finite injective dimension), or equivalently, $R$ is isomorphic to $\omega_R$ (\cite[Theorem 3.3.7]{BH98}).
In general, an $R$-order $A$ being Iwanaga--Gorenstein may not imply that $A$ is Gorenstein as an $R$-order. 
Indeed, when $R$ is artinian, an $R$-order is Gorenstein if and only if $A$ is self-injective, that is, $A$ is a quasi-Frobenius $R$-algebra.

If $A$ is a Gorenstein $R$-order, then
\begin{align}\label{CM-Gproj}
\CM A=\{M\in \mod A \mid \Ext^i_A(M,A)=0~\forall i>0\}=\Gproj A
\end{align}
by \cref{add} and \cite[Corollary 11.5.3]{EJ11}.

\begin{remark}\label{proj-inj}
Let $A$ be an $R$-order, and interpret $\CM A$ as an exact category in a canonical way. Then $A$ and $\omega_A$ are a projective object and an injective object in $\CM A$, respectively. 
If $A$ is Gorenstein, then we have $\CM A=\Gproj A$ as observed above, so the projective objects and the injective objects in $\CM A$ coincide since this holds for the Frobenius category $\Gproj A$.
\end{remark}

Let $A$ an $R$-order.
Given $M,N\in \CM A$, write $\PH(M,N)$ for the set of morphisms factoring though finitely generated projective $A$-modules.
We denote by $\sCM A$ the \emph{stable category} of $\CM A$, that is,  the objects $\sCM A$ are those of $\CM A$, and the hom-set $\Hom_{\sCM A}(M,N)$ for $M,N\in \sCM A$ is defined as $\Hom_A(M,N)/\PH(M,N)$.
If $A$ is a Gorenstein $R$-order, then $\CM A=\Gproj A$, so 
$\sCM A=\sGproj A$ by definition.

Let $A$ be an $R$-order and $M\in \mod A$. Consider a projective resolution $\cdots  \to P^{-1}\to P^{0}\to M\to 0$ by finitely generated projective $A$-modules $P^i$. 
Set $\Omega^0 M:=M$, $\Omega^1 M:=\Ker(P^{0}\to M)$, and $\Omega^i M:=\Ker(P^{-i}\to P^{-i+1})$ for $i>0$.
Then $\Omega^i M$ is called an $i$th syzygy of $M$, which depends on only $M$, up to projective summands. 
Since $A$ is small CM as an $R$-module, we have $\depth_R P\geq d$ for 
every finitely generated projective $A$-module $P$. Thus a standard argument yields $\depth_R \Omega^i M\geq d$ (i.e., $\Omega^i M\in \CM A$) for every $i\geq d$; cf. \cite[Proposition 1.16]{Yos90}.

\begin{remark}\label{stable-hom}
Assume that $A$ is a Gorenstein $R$-order and let $M, N\in \CM A$.
Take an epimorphism $f:P\to N$ in $\mod A$ with $P$ projective. Consider the exact sequence 
$0\to \Omega^1N\to P\xrightarrow{f} N\to 0$.
This induces an exact sequence
\[0\to \Hom_A(M,\Omega^1N)\to \Hom_A(M,P) \to \Hom_A(M,N)\to \Ext^1_A(M,\Omega^1N)\to 0\]
by \cref{CM-Gproj}.
Since any morphism $F\to N$ from a finitely generated projective $A$-module $F$ factors through $f$, the image of the morphism $\Hom_A(M,P) \to \Hom_A(M,N)$ coincides with $\PH(M,N)$. 
Hence there is a canonical isomorphism
\begin{equation*}
\Hom_{\sCM A}(M,N)\cong \Ext^1_{A}(M,\Omega^1N)
\end{equation*}
of $R$-modules. 
This isomorphism will be use in the proof of \cref{Generator}.
\end{remark}

Let $A$ be an $R$-order $A$. We have $ \id_A A\geq \dim R$ by \cref{A-resolution} and \cite[Theorem 3.5.7(b)]{BH98}. Hence it also follows that $\gld A\geq \dim R$, where $\gld A$ denotes the global dimension of $A$.
Recall that $A$ is called \emph{non-singular} if $\gld A=\dim R$, or equivalently, if $\proj A=\CM A$ (\cite[Definition 1.6 and Proposition 2.17]{IW14}). In particular, every non-singular $R$-order is Gorenstein. Moreover, an $R$-order $A$ is non-singular if and only if $\widehat{A}$ is non-singular as an $\widehat{R}$-order; see, e.g., \cite[Proposition 2.26]{IW14}.
For convenience, we say that $A$ is \emph{singular} if $\gld A>\dim R$, or equivalently, if $\proj A\subsetneq \CM A$ .

We remark that an $R$-order $A$ is non-singular if and only if so is $A^\op$. Indeed, given a non-singular $R$-order $A$, we see from \cref{duality} that every $M\in \CM A^\op$ is an injective object in $\CM A^\op$. Since $A^\op$ is Gorenstein by definition, we have $\CM A^\op=\Gproj A^\op$, so the projective objects and the injective objects coincide (\cref{proj-inj}).Then it follows that $\proj A^\op=\CM A^\op$.

\subsection{Auslander--Reiten sequences}
\label{subsec-AR-tri}
Let $\cC$ be an additive category. A morphism $f: L\to M$ in $\cC$ is called \emph{left almost split} if $f$ is not a split monomorphism and given any morphism $a:L \to X$ that is not a split monomorphism, there is a morphism $b:M\to X$ such that $a=bf$.
Dually, a morphism $g: M\to N$ in $\cC$ is called \emph{right almost split} if $g$ is not a split epimorphism and given any morphism $a:X \to N$ that is not a split epimorphism, there is a morphism $b:X\to M$ such that $a=gb$.

Let $(R, \mm)$ be a complete CM local ring. Then $R$ admits a canonical module  (\cite[Corollary 3.3.8]{BH98}).
Let $A$ be an $R$-order. Recall that $\mod A$ is a Krull--Schmidt category (\cref{KS-PC}).
A non-split exact sequence $0\to L\xrightarrow{f} M\xrightarrow{g} N \to 0$ in $\CM A$ is called an \emph{Auslander--Reiten sequence}, or an \emph{AR-sequence}, if $f$ is left almost split and $N$ is indecomposable, or equivalently, if $g$ is right almost split and $L$ is indecomposable (cf. \cite[Chapter V, Proposition 1.14]{ARS97} and \cite[Exercise 13.32]{LW12}).
If $0\to L\xrightarrow{f} M\xrightarrow{g} N \to 0$ is an AR-sequence in $\CM A$, then we refer to it as an \emph{AR-sequence starting from $L$} or an \emph{AR-sequence ending in $N$}.

\begin{lemma}\label{Auslander-lemma}
Let $R$ be a complete CM local ring and $A$ an $R$-order. The following conditions are equivalent.
\begin{enumerate}[label=(\arabic*), font=\normalfont]
\item \label{locally-proj} $\proj A_\pp=\CM A_\pp$ for every $\pp\in \Spec R$ with $\pp\neq \mm$.
\item \label{AR-seq-1} Every indecomposable small CM $A$-module $L$ which is not an injective object in $\CM A$ has an AR-sequence starting from $L$.
\item \label{AR-seq} Every indecomposable small CM $A$-module $N$ which is not a projective object in $\CM A$ has an AR-sequence ending in $N$.
\end{enumerate}
\end{lemma}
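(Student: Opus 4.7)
The strategy is to prove $(1) \Leftrightarrow (3)$ for every $R$-order $A$, and then deduce $(2) \Leftrightarrow (3)$ using the $\omega_R$-duality \cref{duality} together with the left-right symmetry of condition $(1)$. The exact duality $(-)^\vee := \Hom_R(-, \omega_R): (\CM A)^\op \isoto \CM A^\op$ interchanges the projective object $A$ with the injective object $\omega_{A^\op}$ (and $\omega_A$ with $A^\op$), and it carries any AR-sequence starting from an indecomposable $L \in \CM A$ to an AR-sequence in $\CM A^\op$ ending in $L^\vee$. Thus $(2)$ for $A$ is equivalent to $(3)$ for $A^\op$. Since $A_\pp$ is non-singular if and only if $A_\pp^\op$ is (as recorded in \cref{subset-Gorder}), condition $(1)$ is itself left-right symmetric, so the chain $(3)_A \Leftrightarrow (1)_A \Leftrightarrow (1)_{A^\op} \Leftrightarrow (3)_{A^\op} \Leftrightarrow (2)_A$ supplies the remaining equivalences once $(1) \Leftrightarrow (3)$ is in hand.

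For $(1) \Rightarrow (3)$ I follow Auslander's classical construction of AR-sequences over isolated singularities, adapted to $R$-orders. Given an indecomposable $N \in \CM A$ that is not a projective object, I construct a module $\tau N \in \CM A$ (the AR-translate) from $N$ by an appropriate combination of the Auslander transpose, higher syzygies, and the $\omega_R$-duality, and establish the Auslander-Reiten formula $\Ext^1_A(M, \tau N) \cong \Hom_R(\underline{\Hom}_A(N, M), E_R(k))$ for $M \in \CM A$. Condition $(1)$ forces $\underline{\End}_A(N)$ to have finite length over $R$, because for any $\pp \neq \mm$, $N_\pp$ is a projective object of $\CM A_\pp$, so $\underline{\End}_A(N)_\pp = \underline{\End}_{A_\pp}(N_\pp) = 0$. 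Matlis duality (valid since $R$ is complete) then converts $\id_N$ into a canonical nonzero element of $\Ext^1_A(N, \tau N)$; the associated short exact sequence is the desired AR-sequence, with almost-splitness supplied by the universal property of the Matlis dual.

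For $(3) \Rightarrow (1)$ I argue by contrapositive. Assume some $\pp \in \Spec R$ with $\pp \neq \mm$ satisfies $\proj A_\pp \subsetneq \CM A_\pp$, and pick an indecomposable $M \in \CM A_\pp$ that is not a projective object there. Completing at $\pp$ and invoking Krull--Schmidt in $\mod \widehat{A_\pp}$ (\cref{KS-PC}), one lifts $M$ to an indecomposable non-projective $N \in \CM A$ such that $M$ is a direct summand of $N_\pp$. If an AR-sequence $0 \to \tau N \to E \to N \to 0$ existed in $\CM A$, then localizing at $\pp$ would produce an exact sequence in $\CM A_\pp$ whose right almost-split property, applied to the composition $M \hookrightarrow N_\pp$, would exhibit $M$ as a direct summand of a projective object of $\CM A_\pp$, contradicting the choice of $M$. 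The main obstacle is the $(1) \Rightarrow (3)$ direction, specifically the construction of $\tau$ with the correct codomain and the verification of the Auslander-Reiten formula in the noncommutative setting of orders; once these are in place, the finite-length property of $\underline{\End}_A(N)$ furnished by $(1)$ completes the argument routinely.
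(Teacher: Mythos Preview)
Your overall architecture matches the paper exactly: both establish $(1)\Leftrightarrow(3)$ first and then obtain $(2)\Leftrightarrow(3)$ from the duality $\Hom_R(-,\omega_R)$ together with the left--right symmetry of non-singularity. The paper, however, does not argue $(1)\Leftrightarrow(3)$ directly; it invokes \cite[Theorem~2.1(a)]{AR87} and, in \cref{TE-1}\cref{trivial-ext}, reduces from a CM base ring to a Gorenstein one via the trivial extension $S=R\ltimes\omega_R$ (so that Auslander--Reiten's result, stated over a Gorenstein base, applies verbatim). Your sketch of $(1)\Rightarrow(3)$ via the AR-translate and the Auslander--Reiten formula is precisely the content of that citation and is a correct outline.

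Your contrapositive for $(3)\Rightarrow(1)$, however, has a real gap. First, ``completing at $\pp$ and invoking Krull--Schmidt in $\mod\widehat{A_\pp}$'' does not produce an $A$-module: completion moves further away from $A$, not back toward it. One can indeed manufacture an indecomposable non-projective $N\in\CM A$ with $N_\pp$ non-projective (clear denominators in a presentation of $M$, pass to a $d$th syzygy over $A$, and pick an indecomposable summand), but not by the mechanism you name. Second, and more seriously, localizing an AR-sequence $0\to\tau N\to E\to N\to 0$ at $\pp\neq\mm$ does \emph{not} yield a right almost split sequence in $\CM A_\pp$. In fact the Ext class of the AR-sequence is annihilated by $\mm$ (for $r\in\mm$ the map $r\cdot\id_N$ is not a split epimorphism, hence factors through $E\to N$, so $r\xi=0$), and therefore the localized sequence \emph{splits}; all you learn is that $N_\pp$ is a summand of $E_\pp$, which says nothing about projectivity. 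The conclusion that $M$ becomes ``a direct summand of a projective object of $\CM A_\pp$'' does not follow. The standard route, as in \cite{AR87} or \cite[Chapter~I, \S7]{Aus78}, is instead to show that the existence of AR-sequences forces all stable Hom-modules $\underline{\Hom}_A(M,N)$ to have finite length over $R$, from which $(1)$ follows by localizing.
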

\begin{proof}
The equivalence \cref{locally-proj}$\Leftrightarrow$\cref{AR-seq} is essentially shown in \cite[Theorem 2.1(a)]{AR87}. See \cref{TE-1}\cref{trivial-ext} below.

Suppose \cref{AR-seq} holds. By the equivalence \cref{locally-proj}$\Leftrightarrow$\cref{AR-seq},
$A_\pp$ is non-singular for every $\pp\in \Spec R$ with $\pp\neq \mm$, and hence $A_\pp^\op$ is non-singular for every $\pp\in \Spec R$ with $\pp\neq \mm$. Thus $\proj A_\pp^\op=\CM A_\pp^\op$ for every $\pp\in \Spec R$ with $\pp\neq \mm$, that is, \cref{locally-proj} holds for the $R$-order $A^\op$. 
Hence \cref{AR-seq} holds for $A^\op$.
Then \cref{AR-seq-1} holds for $A$ by the duality \cref{duality}. We have shown \cref{AR-seq}$\Rightarrow$\cref{AR-seq-1}.
Conversely, suppose \cref{AR-seq-1} holds.
Then \cref{AR-seq} holds for $A^\op$ by the duality \cref{duality}. 
Hence $A^\op_\pp$ is non-singular for $\pp\in \Spec R$ with $\pp\neq \mm$, or equivalently, 
 $A_\pp$ is non-singular for $\pp\in \Spec R$ with $\pp\neq \mm$. 
Thus \cref{locally-proj} holds for $A$. Then \cref{AR-seq} holds for $A$.
We have shown \cref{AR-seq-1}$\Rightarrow$\cref{AR-seq}.
\end{proof}

\begin{remark}\label{TE-1}
\begin{enumerate}[label=(\arabic*), font=\normalfont, wide, labelwidth=!, labelindent=0pt]
\item Let $(R,\mm)$ and $(S,\nn)$ be commutative noetherian local rings such that $\dim R=\dim S$. Let $f:S\to R$ be a ring homomorphism by which $R$ is a Noether $S$-algebra. 
Then $R/\nn R$ is a artinian local ring, so the Zariski closed subset of $\Spec R$ defined by $\nn R$ is just $\{\mm\}$. Hence $\Gamma_{\nn R} = \Gamma_{\mm}$ as functors $\Mod R\to \Mod R$.
Let $M$ be an $R$-module $M$, which we can also regard as an $S$-module.
Then we have a natural isomorphism $\H^i_\nn M \cong  \H^i_{\nn R} M=\H^i_\mm M$ of $S$-modules for every $i\geq 0$ by the independence theorem of local cohomology; see \cite[4.2.1]{BS13}.
It follows from this fact that a finitely generated $R$-module is maximal CM if and only if it is maximal CM over $S$. In fact, the isomorphism $\H^i_\nn M\cong \H^i_\mm M$ can be recovered by \cref{A-resolution}\footnote{This does not mean that the full generality of the independence theorem is covered by \cref{A-resolution}.}.
\label{local-hom}

\setlength{\itemindent}{12pt}
\item \cite[Theorem 2.1(a)]{AR87}\footnote{``Equidimensional'' in the sense of \cite{AR87} (see \cite[p. 80]{Aus78})  is different from that of \cite{Mat89} and the same meaning as ``coequidimensional'' in the sense of \cite{IW14}.} treats an order over a complete Gorenstein local ring, but  the complete Gorenstein local ring can be replaced by a complete CM local ring. Instead of modifying the proof of \cite[Theorem 2.1(a)]{AR87}, we apply a standard technique using trivial extensions.  For details about trivial extensions, see, e.g., \cite[Remark 11.40 and Theorem 11.42]{ILL+07}.
Recall also that every Gorenstein local ring is CM (\cite[Proposition 3.1.20]{BH98}).

First, let $R$ be a CM local ring with a canonical module $\omega_R$, $S$ the trivial extension of $R$ by $\omega_R$, and $f:S\to R$ the canonical ring homomorphism which is surjective.
It is well known that $S$ is a Gorenstein local ring with $\dim R= \dim S$.  Hence, by \cref{local-hom}, a finitely generated $S$-module is maximal CM if and only if it is maximal CM over $R$.
Next, let $A$ be an $R$-order. Since $f$ is surjective, we have $A_{\pp}=A_{f^{-1}(\pp)}$ for every $\pp\in \Spec R$, and $A_{\qq}=0$ if $\qq\in \Spec S$ and $\{\pp\in \Spec R \mid f^{-1}(\pp)=\qq\}=\emptyset$.
Finally, assume $R$ is complete. Then $S$ is complete because there is also a canonical ring homomorphism $R\to S$ by which we can regard $S$ as a finitely generated $R$-module. 
Then $S$ is a complete Gorenstein local ring and $A$ is an $S$-order. It is easily seen from the above observation that each condition of \cref{Auslander-lemma} for the $R$-order $A$ is equivalent to that of this lemma for the $S$-order $A$.
\label{trivial-ext}

\end{enumerate}
\end{remark}

Let $R$ be a CM local ring with a canonical module. An $R$-order $A$ is said to have \emph{at most an isolated singularity} if, for any $\pp\in \Spec R$ with $\pp\neq \mm$, $A_\pp$ is non-singular.
If, conversely, there exists $\pp\in \Spec R$ with $\pp\neq \mm$ such that $A_\pp$ is singular, then $A$ is said to have a \emph{non-isolated singularity}.
We say that $\CM A$ \emph{has AR-sequences} if \cref{Auslander-lemma}\cref{AR-seq-1} holds, or equivalently, if \cref{Auslander-lemma}\cref{AR-seq} holds. 
The next theorem is well known to experts.

\begin{theorem}\label{Auslander}
Let $R$ be a complete CM local ring and $A$ an $R$-order.
The following conditions are equivalent:
\begin{enumerate}[label=(\arabic*), font=\normalfont]
\item \label{isolate} A has at most an isolated singularity.
\item \label{fl} For all $M,N\in \CM A$, $\Hom_{\sCM A}(M,N)$ is of finite length as an $R$-module.
\item \label{has-AS-sequence}  $\CM A$ has AR-sequences.
\end{enumerate}
\end{theorem}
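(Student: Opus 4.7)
The plan is to derive \cref{isolate}$\Leftrightarrow$\cref{has-AS-sequence} directly from \cref{Auslander-lemma}: condition \cref{isolate} is literally \cref{Auslander-lemma}\cref{locally-proj}, and condition \cref{has-AS-sequence} is the common content of \cref{Auslander-lemma}\cref{AR-seq-1} and \cref{AR-seq}. Thus the substantive work is the equivalence \cref{isolate}$\Leftrightarrow$\cref{fl}, which I will handle by a central-localization argument at primes of $R$.

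For \cref{isolate}$\Rightarrow$\cref{fl}, the first observation is that $\Hom_A(M,N)$ is a finitely generated $R$-module because $A$ is module-finite over the noetherian ring $R$ and $M,N$ are finitely generated; hence so is the quotient $\Hom_{\sCM A}(M,N)$. Next, for every $\pp\in\Spec R$ with $\pp\neq \mm$, a routine clearing-denominators calculation (using that $M$ is finitely presented and that every finitely generated projective $A_\pp$-module is a direct summand of some $A_\pp^n$) yields the natural identification
\[
\Hom_{\sCM A}(M,N)_\pp \;\cong\; \Hom_{\sCM A_\pp}(M_\pp,N_\pp).
\]
Under \cref{isolate} we have $\CM A_\pp=\proj A_\pp$, so $M_\pp$ is itself a finitely generated projective $A_\pp$-module and every morphism $M_\pp\to N_\pp$ factors through it via the identity; hence the right-hand side vanishes. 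Therefore $\Hom_{\sCM A}(M,N)$ is a finitely generated $R$-module supported only at $\mm$, and so has finite length.

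For \cref{fl}$\Rightarrow$\cref{isolate}, I argue by contrapositive: assume some $A_\pp$ with $\pp\neq\mm$ is singular, and exhibit $N\in\CM A$ for which $\End_{\sCM A}(N)$ fails to have finite length. The key input is that, for an $R$-order $A$ over a CM local ring $R$, any maximal CM $A$-module of finite projective dimension is already projective (a consequence of the Auslander--Buchsbaum formula for orders); so any non-projective object of $\CM A_\pp$ must have infinite projective dimension, and all its syzygies remain non-projective. Pick such a non-projective $\bar N\in\CM A_\pp$; by clearing denominators in a finite presentation, lift it to $\tilde N\in\mod A$ with $\tilde N_\pp\cong\bar N$, and set $N:=\Omega^d\tilde N$ with $d:=\dim R$, which lies in $\CM A$ by the syzygy discussion in \cref{subset-Gorder}. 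Localizing a projective resolution of $\tilde N$ and applying Schanuel's lemma yields $N_\pp\cong\Omega^d\bar N\oplus Q$ with $Q$ finitely generated projective over $A_\pp$; since $\Omega^d\bar N$ is non-projective, $\id_{N_\pp}$ is non-zero in $\End_{\sCM A_\pp}(N_\pp)$. The localization identification from the previous paragraph then forces $\End_{\sCM A}(N)_\pp\neq 0$, contradicting finite length. The main obstacle is exactly this step: producing a maximal CM lift whose localization still detects the singularity of $A_\pp$ in the stable category, and the Auslander--Buchsbaum input for orders is what keeps the $d$-th syzygy from accidentally collapsing to a projective.
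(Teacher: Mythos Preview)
Your handling of \cref{isolate}$\Leftrightarrow$\cref{has-AS-sequence} matches the paper's (both invoke \cref{Auslander-lemma}), and your localization argument for \cref{isolate}$\Rightarrow$\cref{fl} is correct; the paper itself simply cites \cite[Chapter~I, Lemma~7.6(d)]{Aus78} for the whole equivalence \cref{isolate}$\Leftrightarrow$\cref{fl}.

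The gap is in \cref{fl}$\Rightarrow$\cref{isolate}. The asserted Auslander--Buchsbaum formula for orders is not valid for arbitrary $R$-orders, so the claim that $\Omega^d\bar N$ remains non-projective whenever $\bar N\in\CM A_\pp$ is non-projective fails in general. Take $R=k[[x]]$ and let $A$ be the path algebra of the quiver $1\to 2$ over $R$; this is an $R$-order (free of rank $3$). For $\pp=(0)$ the ring $A_\pp$ has global dimension $1>0=\dim R_\pp$, so it is singular; the only non-projective indecomposable in $\CM A_\pp=\mod A_\pp$ is the simple $S_1$ at the source, and $\Omega^1_{A_\pp}S_1$ is already projective. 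Your construction $N=\Omega^d\tilde N$ (here $d=1$) therefore yields $N_\pp$ projective for every lift $\tilde N$, and produces no witness. The actual witness in this example is the lift $S_1=(R\to 0)\in\CM A$ itself, which satisfies $\End_{\sCM A}(S_1)\cong R$ of infinite length; your syzygy step destroys it. Your argument does go through when $A$ is a \emph{Gorenstein} $R$-order, since then $\CM A_\pp=\Gproj A_\pp$ and Gorenstein-projectives of finite projective dimension are projective---this suffices for the applications in the paper---but \cref{Auslander} is stated for arbitrary orders, and for that generality one needs a different lifting strategy or Auslander's cited lemma.
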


\begin{proof}
The equivalence \cref{isolate}$\Leftrightarrow$\cref{has-AS-sequence} follows from \cref{Auslander-lemma}. 
The equivalence \cref{locally-proj}$\Leftrightarrow$\cref{fl} follows from \cite[Chapter I, Lemma 7.6(d)]{Aus78}.
\end{proof}

Let $R$ be a complete CM local ring and $A$ an $R$-order. 
We say that $A$ is of \emph{finite CM representation type} if there are, up to isomorphism, only finitely many indecomposable small CM $A$-modules. 
The next theorem is also well known to experts (cf. \cite[Theorem 4.22]{Yos90}).

\begin{theorem}\label{isolate-sing}
Let $R$ be a complete CM local ring and $A$ an $R$-order.
If $A$ is of finite CM representation type, then $A$ has at most an isolated singularity.
\end{theorem}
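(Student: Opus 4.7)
The plan is to apply \cref{Auslander}, which identifies the conclusion ``$A$ has at most an isolated singularity'' with the finite length of every $\Hom_{\sCM A}(X,Y)$ for $X,Y\in \CM A$. So I aim to establish this finite length condition from the finite CM representation type hypothesis.

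First I will reduce to finitely many Hom groups. By \cref{KS-PC}, $\mod A$ is Krull--Schmidt, so the hypothesis yields finitely many representatives $M_1,\ldots,M_n$ of the indecomposable objects of $\CM A$. Since every object of $\CM A$ is a direct sum of copies of the $M_i$, it suffices to prove that each $\Hom_{\sCM A}(M_i,M_j)$ has finite length over $R$. Each $\Hom_A(M_i,M_j)$ is finitely generated over $R$ (as $A$ is module-finite over $R$ and $M_j\in \mod A$), so $\Hom_{\sCM A}(M_i,M_j)$ is finitely generated over $R$ as well, and finite length reduces to showing $\Hom_{\sCM A}(M_i,M_j)_\pp=0$ for every $\pp\in \Spec R\setminus\{\mm\}$.

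Fix such a $\pp$. A routine computation, using that localization is exact and preserves Hom between finitely presented modules, identifies $\Hom_{\sCM A}(M_i,M_j)_\pp$ with the stable Hom over $A_\pp$ between $(M_i)_\pp$ and $(M_j)_\pp$; the point being that the localization of the projective-factoring submodule $\PH(M_i,M_j)$ equals the corresponding submodule on the $A_\pp$-side (any factorization through a finitely generated projective $A_\pp$-module can be cleared of denominators to produce a factorization through a finitely generated projective $A$-module, after multiplying by an element outside $\pp$). Consequently, if $A_\pp$ is non-singular, that is $\CM A_\pp=\proj A_\pp$, then $(M_i)_\pp$ is projective over $A_\pp$, the stable category on such modules collapses, and the desired vanishing follows.

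It therefore remains to prove that $A_\pp$ is non-singular for every $\pp\neq\mm$, which is the main obstacle and the classical heart of Auslander's theorem on rings of finite CM representation type. The standard approach is a projectivization argument: setting $M=\bigoplus_i M_i$ and $\Gamma=\End_A(M)^{\op}$, the functor $\Hom_A(M,-)$ gives an equivalence $\CM A\simeq \proj\Gamma$, and the finite CM representation type hypothesis together with the existence of maximal CM approximations forces $\gld\Gamma<\infty$. Localizing at $\pp$ yields $\gld\Gamma_\pp<\infty$, with $\Gamma_\pp=\End_{A_\pp}(M_\pp)^{\op}$ containing $A_\pp$ as a retract of $M_\pp$; a standard non-commutative resolution / Morita-type argument then forces $A_\pp$ to be non-singular. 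Making this Auslander-style argument precise in the setting of $R$-orders is the main technical content, but as the paper indicates the result is well-known to experts.
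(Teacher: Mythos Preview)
Your first three paragraphs are circular. You invoke \cref{Auslander} to convert ``at most an isolated singularity'' into the finite-length condition on stable Homs, then localize and observe that the required vanishing at each non-maximal $\pp$ would follow if $A_\pp$ were non-singular---which is exactly the conclusion you set out to prove. So the detour through \cref{Auslander} achieves nothing, and you are left (as your final paragraph acknowledges) needing to establish non-singularity of $A_\pp$ directly.

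That final paragraph is in the right spirit but has a genuine gap. The step ``$\gld\Gamma_\pp<\infty$ with $A_\pp$ a corner of $\Gamma_\pp$ forces $A_\pp$ to be non-singular'' is false as stated: the entire theory of noncommutative resolutions rests on MCM modules over \emph{singular} rings whose endomorphism algebras have finite global dimension. What is special in your situation is that $M$ additively generates $\CM A$; to exploit this you would have to argue that $M_\pp$ still additively generates $\CM A_\pp$ (a lifting statement requiring its own proof) and then explain why this, together with $\gld\Gamma_\pp<\infty$, forces $\CM A_\pp=\proj A_\pp$. Neither step is supplied. For comparison, the paper itself gives no self-contained argument: it simply cites \cite[\S 10, Theorem]{Aus86} (proved there over a regular base) and remarks that the same proof carries over to a complete CM local base. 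A clean citation is therefore acceptable here; your closing clause essentially does this, but the preceding circular material should be removed.
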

\begin{proof}
This is  proved in \cite[\S 10, Theorem]{Aus86} assuming $R$ is regular, but the same proof works in our setup.
\end{proof}

\cref{Auslander-lemma,Auslander} are also proved in \cite{Aus86} assuming $R$ is regular. We remark that  if $R$ is a regular local ring and $A$ is an $R$-order, then a finitely generated $A$-module is maximal CM if and only if it is free over $R$ (cf. \cite[Proposition 1.9]{Yos90}). In particular, $A$ is free over $R$ in this case.
It is often but not always possible to replace a given $R$-order $A$ over a complete CM local ring $R$ by an order over a complete regular local ring;  see the remark below.

\begin{remark}\label{regular-loc}
Let $R$ be a commutative noetherian complete local ring, and assume that $R$ contains a filed or $R$ is an integral domain.
Then there exists a subring $S$ of $R$ such that $S$ is a complete regular local ring and the injection $f: S\hr R$ makes $R$ a Noether $S$-algebra, where $\dim R=\dim S$; see \cite[Remark in p. 215 and Theorem 29.4(iii)]{Mat89} and \cite[Corollary A.8]{BH98}. 

Let $f$ be as above. Assume that $R$ is CM and let $A$ be an $R$-order. Using $f$, we naturally regard $A$ as a Noether $S$-algebra, and then $A$ is small CM as an $S$-module by \cref{TE-1}\cref{local-hom}. Hence $A$ is an $S$-order. Notice from \cref{TE-1}\cref{local-hom} that the maximal CM modules over the $R$-order $A$ coincide with the maximal CM modules over the $S$-order $A$.

In general, there exists a complete Gorenstein local ring such that it cannot be an order over any complete regular local ring. Such an example can be found in \cite[Remark in p. 226]{Mat89}.
\end{remark}

\begin{remark}\label{seq-tri}
Let $R$ be a CM local ring with a canonical module and $A$ a Gorenstein $R$-order.
Then $\sCM A=\sGproj A$ (\cref{proj-inj}), so the stable category $\sCM A$ is triangulated and we have the canonical functor $\sCM A=\sGproj A\hr \sGProj A$.

If $0\to L\to M\to N\to 0$ is an exact sequence in $\CM R$, we have a triangle in $\sCM A$ of the form $L\to M\to N\to \Sigma L$. 
Moreover, if the sequence $0\to L\to M\to N\to 0$ is an AR-sequence, the triangle $L\to M\to N\to \Sigma L$ is an AR-triangle. See Section~\ref{AR-triangles} for the definition of AR-triangles.
\end{remark}

For a compactly generated triangulated category $\cT$, we denote by $\cT^\cp$ the full subcategory of compact objects of $\cT$.

\begin{theorem}\label{comp-Gor-case}
Let $R$ be a complete CM local ring and $A$ a Gorenstein $R$-order. The following statements hold:
\begin{enumerate}[label=(\arabic*), font=\normalfont]
\item \label{equiv-comp} $\sGProj A$ is compactly generated and the canonical functor $\sCM A=\sGproj A\to \sGProj A$ induces a triangulated equivalence  $\sCM A\isoto (\sGProj A)^\cp$.
\item \label{AR-stab} Assume that $A$ has at most an isolated singularity. Then every indecomposable object $L\in (\sGProj A)^\cp$ has an AR-triangle starting from $L$.
\end{enumerate}
\end{theorem}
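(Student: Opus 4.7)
For part (1), the Gorenstein $R$-order $A$ is Iwanaga--Gorenstein (see the sentence preceding \cref{CM-Gproj}), so \cref{compact} applies and yields that $\sGProj A$ is compactly generated, with every compact object a summand, in $\sGProj A$, of an object in the essential image of the fully faithful functor $\sCM A = \sGproj A \hookrightarrow \sGProj A$. To upgrade this to the identification $\sCM A \isoto (\sGProj A)^\cp$, my plan is first to check that every $M \in \sGproj A$ is compact in $\sGProj A$: this follows from the commutation of $\Hom_A(M,-)$ with small direct sums for finitely presented $M$, together with the observation that a morphism from such an $M$ into $\bigoplus_i N_i$ which factors through a projective $A$-module factors in fact through a finitely generated projective, and hence through finitely many $N_i$. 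Second, I would observe that $\sGproj A$ is Krull--Schmidt: by \cref{KS-PC} the category $\mod A$ is Krull--Schmidt, $\Gproj A$ inherits this property, and the passage to the stable quotient preserves it since a non-projective indecomposable cannot have its identity factor through a projective. Combining this idempotent completeness with the full faithfulness of the embedding lifts the summand statement of \cref{compact} to an essential surjection onto $(\sGProj A)^\cp$.

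For part (2), let $L \in (\sGProj A)^\cp$ be indecomposable. By (1), $L$ corresponds to an indecomposable non-zero object of $\sCM A$, and hence to an indecomposable object of $\CM A = \Gproj A$ (see \cref{CM-Gproj}) that is not projective, and therefore not injective in $\CM A$ by \cref{proj-inj}. The isolated-singularity hypothesis together with \cref{Auslander} supplies an AR-sequence $0 \to L \to M \to N \to 0$ in $\CM A$, and \cref{seq-tri} promotes this to an AR-triangle $L \xrightarrow{f} M \xrightarrow{g} N \xrightarrow{h} \Sigma L$ in $\sCM A$. The remaining task is to show that this triangle is still an AR-triangle when viewed in the ambient category $\sGProj A$; equivalently, that $f$ is left almost split there.

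The strategy for this extension exploits the compactness of the endpoints $L$ and $\Sigma L$ (and hence of $N$) in $\sGProj A$. Applying $\Hom_{\sGProj A}(-, Y)$ to the triangle, a morphism $\alpha \colon L \to Y$ factors through $f$ if and only if the composition $\Sigma\alpha \circ h \colon N \to \Sigma Y$ vanishes, and for compact $Y$ this vanishing is exactly the AR-property already established in $\sCM A$. I plan to reduce the vanishing for arbitrary $Y$ to the compact case via a Brown-representability argument, using that both $N$ and $\Sigma L$ are compact; equivalently, the conclusion may be obtained by invoking Krause's general principle that an AR-triangle in $\cT^\cp$ starting at an indecomposable compact object is automatically an AR-triangle in the ambient compactly generated triangulated category $\cT$. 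The principal obstacle is precisely this final bootstrapping step: morphisms out of a compact object into an arbitrary object are not in general controlled by compact approximations, and one must use the triangle structure together with compactness of $\Sigma L$ to carry out the reduction carefully.
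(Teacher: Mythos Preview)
Your approach to part \cref{equiv-comp} is essentially the paper's: both invoke \cref{compact}, establish the Krull--Schmidt property of $\sCM A$ (the paper via \cref{KS-St}), and use idempotent splitting to show that the essential image of $\sGproj A\hookrightarrow\sGProj A$ is closed under direct summands. The paper phrases the last step slightly differently---decomposing $N\cong M\oplus L$ into indecomposables with local endomorphism rings and then quoting \cite[Lemma~1.2]{LW12}---but the content is the same as your ``idempotent completeness plus full faithfulness'' argument.

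For part \cref{AR-stab} you have over-read the statement. The AR-triangle is asserted only in $(\sGProj A)^\cp$, not in the ambient $\sGProj A$; this is clear from the paper's one-line proof (``follows from \cref{equiv-comp}, \cref{Auslander}, and \cref{seq-tri}'') and from how the result is later consumed in \cref{Existence}, whose hypothesis \cref{AR} asks only for AR-triangles in $\cT^\cp$. Under this reading your ``remaining task'' evaporates: the AR-triangle manufactured in $\sCM A$ via \cref{Auslander} and \cref{seq-tri} is, by part \cref{equiv-comp}, already an AR-triangle in $(\sGProj A)^\cp$, and nothing more is required. The extension to $\sGProj A$ that you flag as the principal obstacle is a genuine separate issue, but the paper defers it to the proof of \cref{AR isolated}, where it is dispatched exactly by the Krause principle \cite[Proposition~3.2]{Kra05a} you allude to. So your argument is not wrong---it would succeed---but it imports into this proof a step that the paper handles elsewhere.
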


\begin{proof}
Since $A$ is Iwanaga--Gorenstein, $\sGProj A$ is compactly generated, and we have the fully faithful triangulated functor $\sCM A=\Gproj A\hr (\GProj A)^\cp \subseteq \GProj A$. Moreover, given $M\in (\GProj A)^\cp$, there exists $N\in \sCM A$ such that  $N\cong M\oplus L$ in $(\sGProj A)^\cp$ by some $L\in (\sGProj A)^\cp$. See \cref{compact}.
We want to show that $L$ belongs to $\sCM A$, up to isomorphism.
Note that $N$ decomposes into a finite direct sum $\bigoplus_{1\leq i\leq n}N_i$ of indecomposable objects $N_i\in \sCM A$ with local endomorphism ring; see \cref{KS-St} below. 
Thus $\bigoplus_{1\leq i\leq n}N_i\cong M\oplus L$ in $\sGProj A$.
Since $\sGProj A$ is a triangulated category with small direct sums, any idempotent morphism in $\sGProj A$ splits (see \cite[Proposition 1.6.8]{Nee01}).
Then we can easily deduce from \cite[Lemma 1.2]{LW12} that $M$ is in $(\sGProj A)^\cp$ isomorphic to $\bigoplus_{j\in J} N_j$ for some subset $J\subseteq \{1,\ldots,n\}$. 
We have shown \cref{equiv-comp}.

\cref{AR-stab} follows from \cref{equiv-comp,Auslander,seq-tri}.
\end{proof}

\begin{remark}\label{KS-St}
Let $A$ be as in \cref{comp-Gor-case}.
For an indecomposable module $M\in \CM A$, $\End_A(M)$ is a (possibly noncommutative) local ring by \cref{KS-PC}. 
Furthermore, the canonical ring homomorphism $\End_A(M)\to \End_{\sCM A}(M)$ is surjective, so $\End_{\sCM A}(M)$ is a local ring whenever $M$ is not a projective $A$-module.
Since the objects of $\sCM A$ are those of $\CM A$ and the canonical functor $\CM A\to \sCM A$ is additive, given a nonzero object $N\in \sCM A$, it can be in $\CM A$ decomposed as a finite direct sum $N=\oplus_{1\leq i\leq n}N_i$ of indecomposable modules $N_i$ in $\CM A$, and then we have $N= \oplus_{1\leq i\leq n}N_i$ in $\sCM A$, where each $N_i\in \sCM A$ has local endomorphism ring. In particular, $\sCM A$ is a Krull--Schmidt category.
\end{remark}


\section{Proof of the main theorem}
\label{Proof}

We start with the following lemma.

\begin{lemma}\label{idim}
Let $R$ be a commutative noetherian local ring and $A$ a Noether $R$-algebra. Let $M$ be a nonzero finitely generated $A$-module. Assume that $\Ext^n_A(A/\rad A,M)=0$ for some $n\geq\dim R$. Then $\id_A M\leq n-1$.
\end{lemma}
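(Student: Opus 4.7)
The plan is to use \cref{id-test}, which reduces the claim to showing $\Ext^i_A(A/\rad A,M)=0$ for all $i\geq n$. Fix a minimal injective resolution $E=(0\to M\to E^0\to E^1\to \cdots)$ of $M$ in $\Mod A$. By \cref{injective} the complex $\Hom_A(A/\rad A, E)$ has zero differential and $\Hom_A(A/\rad A, E^i)=0$ if and only if $\Gamma_\mm E^i=0$. Therefore $\Ext^i_A(A/\rad A, M)\cong \Hom_A(A/\rad A,E^i)$ vanishes exactly when $\Gamma_\mm E^i=0$, and it suffices to verify that $\Gamma_\mm E^i=0$ for every $i\geq n$. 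The hypothesis gives the base case $\Gamma_\mm E^n=0$.

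Next, I would invoke \cref{A-resolution} to identify $\H^i\Gamma_\mm E\cong \H^i_\mm M$ as $A$-modules, combined with \cref{Groth-vanish} to obtain $\H^i_\mm M=0$ for every $i>\dim R$. Since $n\geq \dim R$, this produces the vanishing $\H^{i+1}\Gamma_\mm E=0$ for every $i\geq n$, which is the local cohomological input that will drive the induction.

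The induction on $i\geq n$ is the heart of the argument. Assume $\Gamma_\mm E^i=0$; I want to deduce $\Gamma_\mm E^{i+1}=0$. Because the incoming differential at position $i+1$ is then zero, $\H^{i+1}\Gamma_\mm E$ is identified with $\ker(\Gamma_\mm E^{i+1}\to \Gamma_\mm E^{i+2})$, and the previous step shows that this kernel vanishes. The key closing observation is that by minimality of $E$, the cycle submodule $\Z^{i+1}E=\ker d^{i+1}$ is essential in $E^{i+1}$, so $\Z^{i+1}E\cap \Gamma_\mm E^{i+1}$ is essential in $\Gamma_\mm E^{i+1}$. That intersection is precisely $\ker(\Gamma_\mm E^{i+1}\to \Gamma_\mm E^{i+2})$, which we just showed is zero; hence the module in which zero is essential must itself be zero, giving $\Gamma_\mm E^{i+1}=0$ and closing the induction.

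The main obstacle, and the only nontrivial step, is the closing essential-submodule argument: one has to carefully combine the minimality of the injective resolution with the fact that taking $\mm$-torsion preserves the relevant essential containment inside $E^{i+1}$, so that a cohomological vanishing statement can be upgraded to a termwise vanishing statement. Everything else — the identification of $\Ext$ with $\Hom_A(A/\rad A, E^i)$, the identification $\H^i\Gamma_\mm E\cong \H^i_\mm M$, and Grothendieck vanishing — is direct application of the facts already assembled in \cref{subsect-Noeth,loc-coh}.
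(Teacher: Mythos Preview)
Your proof is correct and shares the same setup as the paper's: both take a minimal injective resolution $E$, use \cref{injective} to reduce the problem to showing $\Gamma_\mm E^i=0$ for all $i\geq n$, note the base case $\Gamma_\mm E^n=0$, and invoke Grothendieck vanishing via \cref{A-resolution} to get exactness of $\Gamma_\mm E$ in degrees $>d$. The divergence is in the closing mechanism. The paper observes that each $\Gamma_\mm E^i$ is injective (by \cref{indec-gamma}), so the exact sequence $0\to\Gamma_\mm E^{n+1}\to\Gamma_\mm E^{n+2}\to\cdots$ is a bounded-below acyclic complex of injectives and therefore splits; applying $\Hom_A(A/\rad A,-)$ then yields a split exact complex which simultaneously has zero differential, forcing all terms to vanish. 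You instead run an induction, using that minimality makes $\Z^{i+1}E$ essential in $E^{i+1}$, hence its intersection with $\Gamma_\mm E^{i+1}$ is essential there, and this intersection is the vanishing kernel. Your route is arguably more elementary in that it never uses injectivity of $\Gamma_\mm E^i$ or any splitting argument, exploiting minimality directly; the paper's route is a clean one-shot contradiction rather than an induction. Both are short and self-contained given the facts assembled in \cref{subsect-Noeth,loc-coh}.
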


\begin{proof}
Let $E=(0\to E^0\to E^1\to \cdots)$ be a minimal injective resolution of $M$ over $A$.
We have a canonical isomorphism $\RGamma_\mm M\cong \Gamma_{\mm}E$ in $\D(R)$ by \cref{A-resolution}. 
Since $\H^i_\mm M=0$ for every $i>d:=\dim R$ (\cref{Groth-vanish}), the sequence
\[\Gamma_\mm E^d \to \Gamma_\mm E^{d+1}\to  \Gamma_\mm E^{d+2}\to \cdots\]
 is exact, where each $\Gamma_\mm E^i$ is injective by \cref{indec-gamma}. 
Furthermore, $\Gamma_\mm E^n=0$ as $\Hom_A(A/\rad A,E^n)=\Ext^n_A(A/\rad A,M)=0$ by assumption and \cref{injective}.
Since $n\geq d$, we obtain the exact sequence
\[0 \to \Gamma_\mm E^{n+1}\to  \Gamma_\mm E^{n+2}\to\cdots,\]
which splits in $\Mod A$.
Thus
the induced complex 
\[0 \to \Hom_A(A/\rad A, \Gamma_\mm E^{n+1})\to  \Hom_A(A/\rad A,\Gamma_\mm E^{n+2}) \to \cdots\]
splits as well.
However this complex has zero differential, because
 it is a truncated complex of $\Hom_A(A/\rad A, \Gamma_\mm E)\cong \Hom_A(A/\rad A, E)$; see \cref{injective}. 
It follows that $\Ext_A^i(A/\rad A, M)\cong \Hom_A(A/\rad A, E^{i})=0$ for every $i> n$.
Since $\Ext^n_A(A/\rad A,M)=0$ by assumption, we have
\[\sup\{i\mid \Ext^i_A(A/\rad A,M)\neq 0\}\leq n-1.\]
Then $\id_A M \leq n-1$ by \cref{id-test}.
\end{proof}

\begin{remark}\label{Bass-conjecture}
Let $R$, $A$, $M$, and $n$ be as in \cref{idim}. 
Then $n$ cannot be zero. Indeed, if $n$ is zero, then $\dim R$ must be zero, so $A$ is an Artin $R$-algebra; in this case the equality $\Ext^0_A(A/\rad A,M)=0$ implies $M=0$ (see \cref{injective}), but this contradicts the assumption that $M$ is nonzero.

On the other hand, if $A$ is projective as an $R$-module, then $R$ is CM and $n$ is greater than $\dim R$.
This fact follows from the validity of ``Bass conjecture'' (see \cite[Theorem 3.1.17, Corollary 9.6.2, and Remark 9.6.4(ii)]{BH98}) because every injective right $A$-module is an injective $R$-module by the standard isomorphism $\Hom_A(-\otimes_RA,I)\cong \Hom_R(-,\Hom_A(A,I))$.
When $A$ is not projective as an $R$-module, it is unclear if the same fact on $R$ and $n$ holds or not; cf. \cite[Question 3.11]{GN02}.
\end{remark}

The author obtained the following result thanks to a suggestion by Ryo Takahashi.

\begin{proposition}\label{Generator}
Let $R$ be a CM local ring with a canonical module and $A$ a Gorenstein $R$-order. 
Set $d:=\dim R$. Then $\Hom_{\sCM A}(\Omega^d(A/\rad A),N)\neq 0$ for every nonzero object $ N\in \sCM A$.
\end{proposition}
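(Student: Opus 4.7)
The strategy is by contradiction: assume $\Hom_{\sCM A}(\Omega^d(A/\rad A),N)=0$ for some nonzero $N\in\sCM A$, and conclude that $N$ must in fact be projective, i.e.\ zero in $\sCM A$. The first step is to translate the assumed stable-Hom vanishing into an $\Ext$ vanishing over $A$. By \cref{stable-hom},
\[
\Hom_{\sCM A}\bigl(\Omega^d(A/\rad A),\,N\bigr)\;\cong\;\Ext^1_A\bigl(\Omega^d(A/\rad A),\,\Omega^1N\bigr),
\]
and a $d$-fold dimension shift in the first variable (using the short exact sequences $0\to\Omega^{i+1}(A/\rad A)\to P_i\to\Omega^i(A/\rad A)\to 0$ with $P_i$ finitely generated projective, together with the vanishing of $\Ext^{\geq 1}_A(P_i,-)$) yields
\[
\Ext^1_A\bigl(\Omega^d(A/\rad A),\,\Omega^1N\bigr)\;\cong\;\Ext^{d+1}_A\bigl(A/\rad A,\,\Omega^1N\bigr).
\]
Hence the assumption forces $\Ext^{d+1}_A(A/\rad A,\Omega^1N)=0$.

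The next step is to invoke \cref{idim}. Since $N$ is nonzero in $\sCM A$, it is not projective as an $A$-module, so $\Omega^1N$ is a nonzero finitely generated $A$-module. Because $d+1\geq d=\dim R$, \cref{idim} produces the bound $\id_A\Omega^1N\leq d<\infty$. Since $A$ is Iwanaga--Gorenstein with $\id_A A=d$, a finitely generated $A$-module has finite injective dimension if and only if it has finite projective dimension (cf.\ \cite[Theorem~9.1.10]{EJ11}), hence $\pd_A\Omega^1N<\infty$. Moreover $\Omega^1N$ lies in $\Gproj A=\CM A$ by \cref{CM-Gproj}. A finitely generated Gorenstein-projective module of finite projective dimension is projective: this follows by taking a finite projective resolution and inductively splitting each conflation $0\to\Omega^{k+1}(\Omega^1N)\to P_k\to\Omega^k(\Omega^1N)\to 0$ in $\Gproj A$, starting from the projective top and using that projectives are injective in the Frobenius structure of $\Gproj A$ (\cref{proj-inj}). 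Thus $\Omega^1N$ is projective.

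Applying the same splitting principle once more to the defining conflation $0\to\Omega^1N\to P\to N\to 0$ in $\Gproj A$, with $\Omega^1N$ now known to be projective (hence injective in the Frobenius structure), shows that $N$ is a direct summand of the projective module $P$, so $N$ is projective and therefore zero in $\sCM A$, contradicting the choice of $N$. The heart of the argument is \cref{idim}: it is precisely what converts the stable-Hom vanishing (via a purely formal dimension shift) into a genuine projectivity statement about the syzygy $\Omega^1N$. Once this is in hand, the remaining implications are pure Frobenius-category formalism, so the main obstacle to overcome is getting the injective dimension of $\Omega^1N$ under control -- which is exactly what \cref{idim} was proved for.
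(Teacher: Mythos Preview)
Your proof is correct and follows essentially the same route as the paper's: both use \cref{stable-hom} and a dimension shift to reduce to $\Ext^{d+1}_A(A/\rad A,\Omega^1N)=0$, then invoke \cref{idim} and the Iwanaga--Gorenstein property to force projectivity. Your version is slightly more careful in checking that $\Omega^1N\neq 0$ before applying \cref{idim}, and you conclude by first showing $\Omega^1N$ is projective and then splitting the syzygy sequence, whereas the paper goes directly from $\pd_A\Omega^1N<\infty$ to $\pd_A N<\infty$ and uses $N\in\Gproj A$; these are interchangeable endgames.
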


\begin{proof}
Let $0\neq N\in \sCM A$, and suppose that $\Hom_{\sCM A}(\Omega^d(A/\rad A),N)=0$.
By \cref{stable-hom}, we have $\Ext^1_{A}(\Omega^d(A/\rad A),\Omega^1N)=0$.
In addition, there are isomorphisms
\begin{equation*}
\Ext^1_{A}(\Omega^d(A/\rad A),\Omega^1N)\cong \Ext^2_{A}(\Omega^{d-1}(A/\rad A),\Omega^1N) \cong \cdots \cong
 \Ext^{d+1}(A/\rad A,\Omega^1N).
\end{equation*}
Thus $\Ext^{d+1}(A/\rad A,\Omega^1N)=0$. This implies that $\Omega^1N$ has finite injective dimension by \cref{idim}. Then $\Omega^1N$ has finite projective dimension since $A$ is Iwanaga--Gorenstein (see \cite[Theorem 9.1.10]{EJ11}).
Thus $N$ also has finite projective dimension. 
Since $N\in \CM A=\Gproj A$ by \cref{CM-Gproj}, it follows that $N$ is projective, i.e., $N=0$ in $\sCM A$. This is a contradiction. 
\end{proof}

\begin{lemma}\label{pure-inj}
Let $R$ be a CM local ring with a canonical module and $A$ a Gorenstein $R$-order. 
Assume that $M\in \GProj A$ is pure-injective in $\Mod A$. Then $M$ is pure-injective in $\sGProj A$

In particular, if $R$ is complete, then every object of $\sCM A$ is pure-injective in $\sGProj A$.
\end{lemma}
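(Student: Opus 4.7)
The plan is to verify the summation-factorization criterion for pure-injectivity in $\sGProj A$: for every small set $I$, the summation morphism $\bar s : M^{(I)} \to M$ in $\sGProj A$ must factor through the canonical morphism $c' : M^{(I)} \to P$, where $P$ denotes the product of $I$ copies of $M$ in $\sGProj A$. Since $M$ is pure-injective in $\Mod A$, we already have a factorization $s = t \circ c$ in $\Mod A$, with $c : M^{(I)} \to M^I$ canonical and $t : M^I \to M$; the task is to transfer this factorization into $\sGProj A$.

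First, I would identify the product $P$ in $\sGProj A$ concretely. Using the equivalence $\sGProj A \simeq \K_\ac(\Proj A)$ and the right adjoint $\gamma : \K(\Flat A) \to \K(\Proj A)$ to the inclusion recalled in \S\ref{subsect-flat}, $P$ corresponds to $\gamma(\prod_I \widetilde P)$ in $\K_\ac(\Proj A)$, where $\widetilde P$ is a complete projective resolution of $M$. The counit of the adjunction $\iota \dashv \gamma$ produces a morphism $\iota\gamma(\prod_I \widetilde P) \to \prod_I \widetilde P$ in $\K(\Flat A)$ whose degree-zero cycle is a morphism $\alpha_0 : P \to M^I$ in $\Mod A$. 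A routine adjunction argument shows that the $j$th projection $\pi_j : P \to M$ of the product in $\sGProj A$ is represented in $\Mod A$ by $p_j^I \circ \alpha_0$, where $p_j^I : M^I \to M$ is the $j$th projection of $M^I$.

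Set $t' := t \circ \alpha_0 : P \to M$ in $\Mod A$ and denote its class in $\sGProj A$ by $\bar{t'}$. To check $\bar{t'} \circ c' = \bar s$, pick any representative $\widetilde{c'} : M^{(I)} \to P$ of $c'$ in $\Mod A$. The universal property of $c'$ together with the identification $\pi_j = \overline{p_j^I \alpha_0}$ forces $p_j^I(\alpha_0 \widetilde{c'} - c) : M^{(I)} \to M$ to factor through a projective $A$-module for each $j$, say as $a_j \circ b_j$ with $b_j : M^{(I)} \to Q_j$ projective and $a_j : Q_j \to M$. Assembling these componentwise, $\alpha_0 \widetilde{c'} - c : M^{(I)} \to M^I$ factors as $M^{(I)} \xrightarrow{(b_j)_j} \prod_j Q_j \xrightarrow{(a_j)_j} M^I$, through the flat $A$-module $\prod_j Q_j$; consequently $t'\widetilde{c'} - s = t(\alpha_0 \widetilde{c'} - c)$ factors through $\prod_j Q_j$, which has finite projective dimension since $A$ is Iwanaga--Gorenstein.

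The final input is the fact that any morphism between Gorenstein-projective $A$-modules factoring through a module of finite projective dimension already factors through a projective one: the hereditary cotorsion pair $(\GProj A, \mathcal{W})$ provided by the Iwanaga--Gorenstein property of $A$ yields $\Ext^{\geq 1}_A(X, W) = 0$ for $X \in \GProj A$ and $W$ of finite projective dimension, so any morphism $X \to W$ lifts along a projective cover of $W$. Applying this with $X = M^{(I)}$ and $W = \prod_j Q_j$ yields $\bar{t'} \circ c' = \bar s$ in $\sGProj A$ and completes the main claim. The ``in particular'' assertion is then immediate: under completeness of $R$, every finitely generated $A$-module is pure-injective in $\Mod A$ by \cref{KS-PC}, and since $A$ is a Gorenstein $R$-order $\sCM A = \sGproj A$ sits inside $\sGProj A$. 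I expect the most delicate step to be the explicit identification of $P$ and of its projections $\pi_j$ via the adjunction $\iota \dashv \gamma$; this is the standard construction of products in a compactly generated triangulated category, but carrying it out at the level of degree-zero cycles requires some care.
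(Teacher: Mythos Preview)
Your argument is correct, but the paper's proof takes a considerably more elementary route that avoids all the machinery from \S\ref{subsect-flat}. Instead of identifying the product $P$ in $\sGProj A$ explicitly via $\gamma$ and $\K_\ac(\Flat A)$, the paper simply chooses an arbitrary lift $f\colon M^{(I)}\to N$ in $\GProj A$ of the canonical morphism $\phi\colon M^{(I)}\to M^I$ in $\sGProj A$, lifts each projection $\pi_j$ to some $p_j\colon N\to M$, and observes that $p_jfe_j$ differs from $\id_{M_j}$ by a map factoring through a projective $Q_j$. Stacking $f$ with the resulting $(g_j)_j\colon M^{(I)}\to\bigoplus_j Q_j$ produces a map $M^{(I)}\to N\oplus\bigoplus_j Q_j$ that is a \emph{pure monomorphism} in $\Mod A$ (each restriction to $M_j$ is split), and then pure-injectivity of $M$ in $\Mod A$ is invoked directly to factor the summation morphism. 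Your approach trades this pure-mono trick for the explicit description of the product and the Gorenstein cotorsion-pair fact that morphisms from $\GProj A$ factoring through finite-projective-dimension modules already factor through projectives. Both arguments hinge on the Iwanaga--Gorenstein hypothesis (you use it to bound $\pd_A\prod_j Q_j$; the paper uses it only implicitly to know $\sGProj A$ has products), but the paper's proof is self-contained within the stable-category formalism and needs neither the flat-complex adjunction nor Chase's theorem. Your route has the advantage of making the product in $\sGProj A$ concrete, which could be useful in other contexts.
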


\begin{proof}
Since triangulated category $\sGProj A$ is compactly generated (\cref{compact}), it has small direct sums and small direct products (see \cite[Lemma 1.5]{Kra00}).
Let $I$ be a small set and let $\phi:M^{(I)}\to M^I$ be the canonical morphism in $\sGProj A$.
It directly follows from the definition of $\sGProj A$ that the canonical functor $G:\GProj A\to \sGProj A$ commutes with small direct sums; see also \cref{diagram}.
By the definition of $\sGProj A$ again, there exist $N\in \GProj A$ and a morphism $f: M^{(I)}=\bigoplus_{i\in I}M_i\to N$ in $\GProj A$ such that $G(f)=\phi$, where $M_i:=M$.
For each $j\in I$, we have the canonical injection $\varepsilon_j:M_j\to \bigoplus_{i\in I}M_i$ and the canonical projection $\pi_j:\prod_{i\in I}M_i\to M_j$ in $\sGProj A$. 
Clearly, the canonical injection $e_j:M_i\to \bigoplus_{i\in I}M_i$ in $\GProj A$ satisfies $G(e_j)=\varepsilon_j$. Moreover, there exists a morphism $p_j:N\to M_j$ in $\GProj A$ such that $G(p_j)=\pi_j$.
By definition, the composition 
\[M_j\xrightarrow{\varepsilon_j} \bigoplus_{i\in I}M_i\xrightarrow{\phi} \prod_{i\in I}M_i \xrightarrow{\pi_i}M_j\]
 is the identity $\underline{\id}_{M_j}: M_j\to M_j$ in $\sGProj A$, and so $G(p_jfe_j)=\pi_j \phi\varepsilon_j=\underline{\id}_{M_j}$.
 Then there exist morphisms $M_j\xrightarrow{g_j} Q_j\xrightarrow{h_j} M_j$ in $\GProj A$ such that $Q_j$ is a projective $A$-module and $p_j f e_j+h_j g_j$ equals the identity  $\id_{M_j}:M_j\to M_j$  in $\GProj A$.

Now, denote by $(g_i)_{i\in I}$ the morphism $\bigoplus_{i\in I}M_i\to \bigoplus_{i\in I}Q_i$ induced by $g_i: M_i\to Q_i$ for each $i\in I$.
The composition
\begin{equation}\label{pure-mono}
M_j\xrightarrow{\ \, e_j\ \,} \bigoplus_{i\in I}M_i\xrightarrow{\scriptsize\begin{pmatrix}f \\ (g_i)_{i\in I}\end{pmatrix}} N\oplus (\bigoplus_{i\in I}Q_i)\xrightarrow{\scriptsize\begin{pmatrix}p_j&h_j\end{pmatrix}} M_j.\end{equation}
in $\GProj A$ is $\id_{M_j}$ as $p_j f e_j+h_j g_j=\id_{M_j}$. Then it easily follows that the second morphism of \cref{pure-mono} is a pure monomorphism in $\Mod A$.
Since $M$ is pure-injective in $\Mod A$, $\Hom_A(-,M)$ sends the pure monomorphism to a surjection. Therefore the summation morphism $s: \bigoplus_{i\in I}M_i\to M$ in $\GProj A$ can be written as the composition of the second morphism of \cref{pure-mono} and some morphism $u:N\oplus (\bigoplus_{i\in I}Q_i)\to M$.
Send $s$, $u$, and \cref{pure-mono} to $\sGProj A$ by the canonical functor $G: \GProj A\to \sGProj A$.
Then we obtain the following commutative diagram:
\[
\begin{tikzcd}
M_j \ar[r,"\varepsilon_j"]&\displaystyle{\bigoplus_{i\in I}M_i}\ar[rr,"\phi"]\arrow[rd,"G(s)"'] &&\displaystyle{\prod_{i\in I}M_i}\ar[r,"\pi_j"]\ar[ld,"G(u)"]&M_j\\
&& M
\end{tikzcd}
\]
Since $G(s)$ is the summation morphism $\bigoplus_{i\in I}M_i\to M$ in $\sGProj A$, 
the above diagram along with \cref{injectives} shows that $M$ is pure-injective in $\sGProj A$.

The second claim of the lemma follows from the first claim and \cref{KS-PC}.
\end{proof}

\begin{proposition}\label{non-isolate}
Let $R$ be a complete CM local ring and $A$ a Gorenstein $R$-order.
Assume that $A$ has a non-isolated singularity. Then there exists a non-compact indecomposable pure-injective object in $\sGProj A$.
\end{proposition}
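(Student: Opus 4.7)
The plan is to exploit the non-isolated singularity to build the desired pure-injective object locally at a prime $\pp \neq \mm$ where $A_\pp$ is singular, and then transport it back to $\sGProj A$ via an adjunction. Since $A_\pp$ is singular, so is $\widehat{A_\pp}$, and the latter is itself a Gorenstein $\widehat{R_\pp}$-order (\S\ref{subset-Gorder}). Because $\widehat{R_\pp}$ is complete, \cref{KS-PC} guarantees that $\CM \widehat{A_\pp}$ is Krull--Schmidt, so singularity provides a non-projective indecomposable $M \in \CM \widehat{A_\pp}$; equivalently, $M$ is a nonzero indecomposable of $\sCM \widehat{A_\pp} = \sGproj \widehat{A_\pp}$. \cref{KS-PC} further makes $M$ pure-injective in $\Mod \widehat{A_\pp}$, and then \cref{pure-inj} promotes this to pure-injectivity in $\sGProj \widehat{A_\pp}$.

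I would transport $M$ along the right adjoint $\rho : \sGProj \widehat{A_\pp} \to \sGProj A$ to $-\otimes_A \widehat{A_\pp}$, obtained from the right adjoint $\K_{\ac}(\Proj \widehat{A_\pp}) \to \K_{\ac}(\Proj A)$ constructed in \S\ref{subsect-flat} via the equivalences in \cref{diagram}. By \cref{pure-right}, $\rho(M)$ is pure-injective in $\sGProj A$. By \cref{pure-acyclic}, applied to the complete resolution of $M$, the canonical map induces a ring isomorphism $\End_{\sGProj \widehat{A_\pp}}(M) \isoto \End_{\sGProj A}(\rho M)$; the former is local and nonzero, so $\rho(M)$ is nonzero and indecomposable in $\sGProj A$.

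The hard step, as I see it, is to show that $\rho(M)$ is not compact. Suppose it were. By \cref{comp-Gor-case}\cref{equiv-comp}, $\rho M$ lies (up to isomorphism) in $\sCM A$, where it is still nonzero. Setting $d = \dim R$ and applying \cref{Generator},
\[
\Hom_{\sCM A}\bigl(\Omega^d(A/\rad A),\, \rho M\bigr) \neq 0.
\]
Full faithfulness of $\sCM A \hookrightarrow \sGProj A$ together with the adjunction $(-\otimes_A \widehat{A_\pp}) \dashv \rho$ identifies this group with $\Hom_{\sGProj \widehat{A_\pp}}(\Omega^d(A/\rad A) \otimes_A \widehat{A_\pp},\, M)$, so the contradiction will follow once I show $\Omega^d(A/\rad A) \otimes_A \widehat{A_\pp} = 0$ in $\sGProj \widehat{A_\pp}$. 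For this, note that $A/\rad A$ is annihilated by $\mm$, hence $(A/\rad A)_\pp = 0$ since $\pp \neq \mm$; localizing a finitely generated projective resolution of $A/\rad A$ at $\pp$ exhibits $\Omega^d(A/\rad A)_\pp$ as an $A_\pp$-module of projective dimension at most $d-1$. On the other hand, $\Omega^d(A/\rad A) \in \CM A$ (syzygies of order $\geq d$ land in $\CM A$), so $\Omega^d(A/\rad A)_\pp \in \CM A_\pp = \Gproj A_\pp$; a Gorenstein-projective module of finite projective dimension is projective. Flat base change to $\widehat{A_\pp}$ preserves projectivity, so the tensor product is projective, hence zero in $\sGProj \widehat{A_\pp}$, as required. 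The subtle point is precisely this balance: \cref{Generator} forces the existence of a morphism from a specific test object into every nonzero object of $\sCM A$, while the choice $\pp \neq \mm$ makes that very test object vanish after flat extension to $\widehat{A_\pp}$.
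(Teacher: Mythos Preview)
Your argument is correct and follows the paper's strategy through the construction of $\rho(M)$ and the verification of indecomposability via \cref{pure-acyclic}. The divergence is only in the final non-compactness step.

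The paper argues more directly: the ring isomorphism from \cref{pure-acyclic} is $R$-linear, so $\End_{\sGProj A}(\rho M)\cong \End_{\sCM \widehat{A_\pp}}(M)$ is a nonzero $R_\pp$-module. If $\rho M$ were compact, then by \cref{comp-Gor-case}\cref{equiv-comp} its endomorphism ring would be a finitely generated $R$-module; but a nonzero finitely generated $R$-module cannot be $\pp$-local for $\pp\neq\mm$ (choose $s\in\mm\setminus\pp$ and apply Nakayama). This avoids any appeal to \cref{Generator}. Your route instead feeds the hypothetical compactness into \cref{Generator} and kills the resulting Hom via the adjunction, using that $(A/\rad A)_\pp=0$ forces $\Omega^d(A/\rad A)\otimes_A\widehat{A_\pp}$ to be projective. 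This is a perfectly valid alternative; it is slightly longer and imports an extra proposition, but it has the merit of making explicit \emph{which} compact object fails to detect $\rho M$. One small wording issue: the localized resolution does not literally exhibit $\Omega^d(A/\rad A)_\pp$ as having projective dimension $\leq d-1$; rather, an easy induction on the short exact sequences $0\to\Omega^{i+1}\to P_i\to\Omega^i\to 0$ shows directly that each $(\Omega^i(A/\rad A))_\pp$ is projective over $A_\pp$, which is what you need (and your subsequent ``Gorenstein-projective of finite projective dimension is projective'' step then becomes redundant).
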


\begin{proof}
By assumption, there exists a non-maximal prime ideal $\pp\in \Spec R$ such that $A_\pp$ is singular as an $R_\pp$-order, or equivalently, $\widehat{A_\pp}$ is singular as an $\widehat{R_\pp}$-order.
Hence there exists an indecomposable small CM module $M$ over $\widehat{A_\pp}$ such that $M\notin \proj  \widehat{A_\pp}$. 
Since $\End_{\sCM \widehat{A_\pp}}(M)=\End_{\sGProj \widehat{A_\pp}}(M)$ is a local ring (\cref{KS-St}), $M$ is indecomposable in $\sGProj \widehat{A_\pp}$.
Furthermore, $M$ is pure-injective in $\sGProj \widehat{A_\pp}$ by \cref{KS-PC,pure-inj}.
Let $X\in \K_{\ac}(\proj \widehat{A_\pp})$ be a complete resolution of $M$.
By the triangulated equivalence $\sGProj \widehat{A_\pp}\cong \K_{\ac}(\Proj \widehat{A_\pp})$, $X$ is pure-injective in $\K_{\ac}(\Proj \widehat{A_\pp})$.
Then the functor $\rho: \K_{\ac}(\Proj \widehat{A_\pp})\to \K_{\ac}(\Proj A)$ defined in \cref{subsect-flat} sends $X$ to a pure-injective object $\rho X$ in $\K_{\ac}(\Proj A)$; see \cref{pure-right}.
Since we have the triangulated equivalence $\K_{\ac}(\Proj A)\cong \sGProj A$, it remains to show that $\rho X$ is non-compact and indecomposable.

By \cref{pure-acyclic}, there is a natural isomorphism
\[\End_{\K(\Proj \widehat{A_\pp})}(X)\isoto \End_{\K(\Proj A)}(\sigma X)\] of rings, and 
$\End_{\sGProj \widehat{A_\pp}}(M)\cong \End_{\K(\Proj \widehat{A_\pp})}(X)$ is a local ring.
Thus $\sigma X$ is indecomposable in $\K_\ac(\Proj A)$.
Moreover,  $\End_{\K(\Proj \widehat{A_\pp})}(X)$ is a nonzero $R_\pp$-module and the functor $\rho$ is $R$-linear. Therefore $\End_{\K(\Proj A)}(\sigma X)$ is a nonzero $R_\pp$-module.
Then $\sigma X$ cannot be compact in $\K_\ac(\Proj A)$ by Nakayama's lemma, because for every $Y\in \K_\ac(\Proj A)^\cp$, $\End_{\K(\Proj A)}(Y)$ is finitely generated as an $R$-module by the equivalence
$\K_\ac(\Proj A)^\cp\cong \sCM A$; see \cref{diagram} and 
\cref{comp-Gor-case}\cref{equiv-comp}.
\end{proof}

We are now ready to prove the main theorem.

\begin{proof}[Proof of \cref{main-theorem}.]
We first show the implication \cref{fCM}$\Rightarrow$\cref{GP}. Suppose \cref{fCM} holds. 
Thus there are only finitely many indecomposable small CM $A$-modules $M_1,\ldots,M_n$ up to isomorphism. 
Consider a sequence 
\begin{align}\label{sequence}
X_1\xrightarrow{f_1} X_2\xrightarrow{f_2} X_2 \xrightarrow{f_3} \cdots
\end{align}
of indecomposable compact objects in $\sGProj A$, and suppose that each $f_i$ is not an isomorphism.
Since $\sCM A\cong(\sGProj A)^\cp$ by \cref{comp-Gor-case}\cref{equiv-comp},
without loss of generality, we may assume that there exists an infinite set $S$ of positive integers $i$ such that $M_1= X_i$ in $\sGProj A$ for all $i\in S$; see also \cref{KS-St}. 
Since $E:=\End_{\sCM A}(M_1)$ is a local ring and $A$ has at most an isolated singularity by assumption,  $E$ is of finite length as an $R$-module; see \cref{Auslander}. 
Hence $E$ is an artinian local ring. 

Let $J$ be the maximal ideal of $E$. Then $J$ consists of the non-isomorphisms from $M_1$ to $M_1$. Moreover, there exists an integer $t\geq 1$ with $J^t=0$.
Since $S$ is an infinite set, we can take some integers $s_1, s_2,\ldots, s_t\in S$ with  $0\leq s_1<s_2<\cdots <s_t<s_{t+1}$.
For each $1\leq j\leq t$, let $g_j$ be the composition of all morphism between $X_{s_j}$ and $X_{s_{j+1}}$ in \cref{sequence}. 
Then we obtain the following sequence
\[X_{s_1}\xrightarrow{g_1}X_{s_2}\xrightarrow{g_2}X_{s_3}\xrightarrow{g_3}\cdots \xrightarrow{g_{t}}X_{s_{t+1}}.\]
By the definition of $S$, we have $X_{s_1}=M_1$ for each $1\leq j\leq t$.
Furethermore, every $g_j$ is not an isomorphism because for every $i\geq 0$, $f_i$ in \cref{sequence} is not an isomorphism and $X_i$ is indecomposable.
As a consequence, we have $g_j\in J$ for every $1\leq j\leq t$.
Thus $g_t \cdots g_1\in J^t=0$, and then \[g_t \cdots g_1=f_{{s_{t+1}}-1} \cdots f_{s_1}\] is the zero map. Therefore, by \cite[Theorem 2.10]{Kra00} or \cite[Theorem 9.3]{Bel00}, each object $Y\in \sGProj A$ has a decomposition $Y=\bigoplus_{\lambda\in \Lambda}Y_\lambda$ in $\sGProj A$ with $Y_\lambda \in \sCM A=\sGproj A\cong (\sGProj A)^\cp$ for each $\lambda\in \Lambda$.
Then there exist projective $A$-modules $P$ and $Q$ such that $Y\oplus P\cong (\bigoplus_{\lambda\in \Lambda}Y_\lambda) \oplus Q$ in $\GProj A$.
It follows from \cref{Warfield} and \cite[Theorem 1]{War69} that $Y$ is isomorphic to a direct sume of finitely generated Gorenstein-projective $A$-modules.
Thus \cref{GP} holds.

The implication \cref{GP}$\Rightarrow$\cref{cCM} follows from \cref{comp-Gor-case}\cref{equiv-comp}.

Finally, we prove the implication \cref{cCM}$\Rightarrow$\cref{fCM}. 
Suppose $\cref{cCM}$ holds. Then $A$ has at most an isolated singularity by \cref{non-isolate}.
Set $\cT:=\sGProj A$ and $d:=\dim R$.
By the triangulated equivalence $\sCM A\isoto \cT^\cp$, we may regard $F:=\Hom_{\sCM A}(\Omega_d(A/\rad A),-)$ as a coherent functor $\cT\to \Ab$; see \cref{TopZiegler}.
Now, suppose that $\CM A$ has infinitely many indecomposable objects up to isomorphism; then so does $\sCM A$ (see \cref{KS-St}).
Thus we see from \cref{KS-PC,pure-inj} that $\cT^\cp$ contains,  up to isomorphism, infinitely many indecomposable pure-injective objects in $\cT$.
By \cref{Generator}, the isomorphism classes of all such objects belong to the open set $(F)$ of the Ziegler spectrum $\Zg_\cT$ (see \cref{TopZiegler}).
Therefore,$(F)$ contains, up to isomorphism, infinitely many objects in $\cT^\cp$.
Then there exists a non-compact indecomposable pure-injective object in $\cT=\sGProj A$ by \cref{comp-Gor-case}\cref{AR-stab} and \cref{Existence}, but this is a contradiction. Hence $A$ is of finite CM representation type, that is, \cref{fCM} holds.
\end{proof}

\subsection*{Acknowledgments}
The author is grateful to Rosanna Laking for valuable discussions and her excellent appendix.
The author would also like to thank Osamu Iyama and Ryo Takahashi for helpful comments and suggestions.

This project was started when the author was a postdoc at the University of Verona. Part of the project was completed while he was a JSPS Research Fellow at the Nagoya University and later at the University of Tokyo.

\appendix
\section{The Ziegler spectrum}

In this appendix we use the Ziegler spectrum of a compactly generated category to lay the groundwork for the proof of $(3)\Rightarrow(1)$ in \cref{main-theorem}.  Namely we provide a sufficient condition for the existence of a non-compact indecomposable pure-injective object in a compactly generated triangulated category.

Our approach is inspired by Prest's observation that \cref{Artin} can be proved using various topological properties of the Ziegler spectrum of an Artin algebra \cite[\S 5.3.4]{Pre09}.  For example, the proof of $(3)\Rightarrow(1)$ of \cref{Artin} can be viewed as a consequence of the fact that the Ziegler spectrum is quasi-compact and that every finitely generated module corresponds to an isolated point of the spectrum.  This point of view is also explained in \cite[\S 1.7]{Lak} in the case of a finite-dimensional algebra.  The crucial difference between the Ziegler spectrum of a module category and that of a compactly generated triangulated category is that the latter is not, in general, quasi-compact (see \cite[Theorem 17.3.22]{Pre09}). In this appendix, we will adapt Prest's argument to our setting by working within a quasi-compact open set.

\subsection{The points of the Ziegler spectrum}
\label{PointsZiegler}
Let $\cT$ be a compactly generated triangulated category with the full subcategory of compact objects of $\cT$ denoted by $\cT^\cp$ (see \cite[\S 5.3]{Kra10}).  The category $((\cT^\cp)^\op, \Ab)$ of additive contravariant functors from $\cT^\cp$ to the category $\Ab$ of abelian groups is a locally coherent Grothendieck category (see, for example, \cite[Theorems 10.1.3 and 16.1.14]{Pre09}, \cite[\S 1.2]{Kra00}, and \cite[Lemma 1.7]{Kra02}). It follows that the isomorphism classes of indecomposable injective objects of $((\cT^\cp)^\op, \Ab)$ form a small set (\cite[\S 3]{Kra97}). Moreover, by \cite[Lemma 1.7]{Kra00}, every injective object $E$ in $((\cT^\cp)^\op, \Ab)$ uniquely determines an object $X \in \cT$ (up to isomorphism) such that $E \cong \Hom_\cT(-, X)|_{\cT^\cp}$.  

The following theorem characterises the objects of $\cT$ that correspond to the injective objects of $((\cT^\cp)^\op, \Ab)$ in this way.  A triangle $X \to Y \to Z \to \Sigma X$ in $\cT$ is called \textit{pure} if the sequence \[ 0 \to \Hom_\cT(C, X) \to \Hom_\cT(C, Y) \to \Hom_\cT(C, Z) \to 0\] is exact in $\Ab$ for every $C\in \cT^\cp$.  An object $X \in \cT$ is called \textit{pure-injective} if every pure triangle of the form $X \to Y \to Z \to \Sigma X$ is a split triangle.  Recall that there is a unique morphism $X^{(I)} \to X$ induced by the identity $X_i := X \to X$ for each $i\in I$; we refer to this as the \textit{summation morphism}.

\begin{theorem}[{\cite[Theorem 1.8]{Kra00}}]\label{injectives}  The following statements are equivalent for $X \in \cT$. \begin{enumerate}
\item $X$ is a pure-injective object of $\cT$.
\item $\Hom_\cT(-, X)|_{\cT^\cp}$ is an injective object of $((\cT^\cp)^\op, \Ab)$.
\item For every small set $I$, the summation morphism $X^{(I)} \to X$ factors through the canonical morphism $X^{(I)} \to X^I$.
\end{enumerate}
\end{theorem}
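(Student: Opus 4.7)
The plan is to transfer the question to the locally coherent Grothendieck category $\cA := ((\cT^\cp)^\op, \Ab)$ via the restricted Yoneda functor $H \colon \cT \to \cA$, $X \mapsto \Hom_\cT(-,X)|_{\cT^\cp}$. Two features of $H$ do most of the work: it commutes with small products, and by compactness of test objects $H(X^{(I)}) = H(X)^{(I)}$ in $\cA$, so $H$ carries the canonical morphism $c \colon X^{(I)} \to X^I$ to the natural monomorphism $H(X)^{(I)} \hookrightarrow H(X)^I$ and the summation $s \colon X^{(I)} \to X$ to the summation $H(X)^{(I)} \to H(X)$. By definition a triangle in $\cT$ is pure if and only if $H$ sends it to a short exact sequence in $\cA$.

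For $(1) \Leftrightarrow (2)$, I would use the correspondence between pure triangles $X \to Y \to Z \to \Sigma X$ in $\cT$ and short exact sequences $0 \to H(X) \to H(Y) \to H(Z) \to 0$ in $\cA$. The delicate ingredient is the bijection $\Hom_\cT(Y, X) \isoto \Hom_\cA(H(Y), H(X))$ valid when $H(X)$ is an injective object, a consequence of the uniqueness statement in \cite[Lemma 1.7]{Kra00} quoted in the excerpt. Given this, a splitting of the short exact sequence in $\cA$ lifts to a splitting of the triangle in $\cT$ and conversely, so $H(X)$ injective in $\cA$ is equivalent to every pure triangle starting at $X$ splitting in $\cT$. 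For $(2) \Rightarrow (3)$, injectivity of $H(X)$ yields an extension $\bar{t} \colon H(X)^I \to H(X)$ of the summation $H(X)^{(I)} \to H(X)$ along the monomorphism $H(X)^{(I)} \hookrightarrow H(X)^I$, and the bijection above lifts this to a morphism $t \colon X^I \to X$ in $\cT$ with $ts = s$.

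The implication $(3) \Rightarrow (1)$ is the main obstacle. The tempting reduction, to note that $(3)$ is the summation-factorisation criterion for $H(X)$ to be pure-injective inside the Grothendieck category $\cA$ and then combine this with fp-injectivity to conclude injectivity, does not work cleanly because $H(X)$ need not be fp-injective in $\cA$ for general $X \in \cT$. Instead, one has to work more directly in $\cT$: given a pure triangle $X \xrightarrow{f} Y \to Z \to \Sigma X$, construct a retraction $r \colon Y \to X$ of $f$ by choosing a suitable indexing set $I$ and an \emph{evaluation} morphism $\eta \colon Y \to X^I$ whose composition with $f$ encodes a compatible family of morphisms $X \to X$, then invoking $(3)$ to collapse this family through the summation factorisation $t \colon X^I \to X$ and setting $r := t\eta$. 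Verifying $rf = \id_X$ requires testing against compact objects and exploiting the purity of the original triangle, which ensures that all the relevant morphisms and lifts exist on compact test objects. This last step is the technical heart, and carrying it out in detail is the content of Krause's original proof of \cite[Theorem 1.8]{Kra00}.
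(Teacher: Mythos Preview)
The paper does not give its own proof of this theorem; it is stated as a citation of \cite[Theorem~1.8]{Kra00} and used as a black box. So there is nothing in the paper to compare your proposal against.

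That said, your proposal contains a substantive error that derails the argument. Your claim that ``$H(X)$ need not be fp-injective in $\cA$ for general $X \in \cT$'' is false: for \emph{every} $X \in \cT$, the object $H(X) = \Hom_\cT(-,X)|_{\cT^\cp}$ is fp-injective in $\cA$. This is because $\Hom_\cT(-,X)$ is cohomological: any finitely presented $F \in \cA$ has a projective resolution by representables obtained by iterating triangles in $\cT^\cp$, and applying $\Hom_\cA(-, H(X))$ to that resolution reproduces the long exact sequence of $\Hom_\cT(-,X)$ on those triangles, so $\Ext^1_\cA(F, H(X)) = 0$. This fact is exactly what makes Krause's actual proof work, and it makes the route you dismissed go through cleanly: condition (3) transports via $H$ (which preserves the relevant products and coproducts) to the summation criterion for $H(X)$ in the locally coherent category $\cA$, so $H(X)$ is pure-injective there; combined with fp-injectivity, $H(X)$ is injective, which is (2). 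Your vague direct construction for $(3)\Rightarrow(1)$ is therefore unnecessary, and your attribution of it to ``Krause's original proof'' is not accurate.

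There is a second gap in your $(1)\Leftrightarrow(2)$. You argue that a splitting in $\cA$ lifts to a splitting in $\cT$ ``and conversely'', but the converse direction $(1)\Rightarrow(2)$ is not justified by what you wrote: a general short exact sequence $0 \to H(X) \to M \to N \to 0$ in $\cA$ need not come from a pure triangle in $\cT$, since $M$ need not lie in the essential image of $H$. Again the fix is fp-injectivity: since $H(X)$ is always fp-injective, injectivity of $H(X)$ is equivalent to pure-injectivity of $H(X)$ in $\cA$, and the latter is what one actually relates to pure-injectivity of $X$ in $\cT$ (for instance via the injective envelope of $H(X)$, which by \cite[Lemma~1.7]{Kra00} has the form $H(Y)$).
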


A consequence of this theorem is that the isomorphism classes of indecomposable pure-injective objects in $\cT$ form a small set, which is called the \textit{Ziegler spectrum of $\cT$}.  We denote this set by $\Zg_\cT$.

\subsection{The topology on $\Zg_\cT$}
\label{TopZiegler}
In concurrent papers, Krause \cite{Kra97} and Herzog \cite{Her97} defined the (Ziegler) spectrum of a locally coherent Grothendieck category $\mathcal{G}$.  The space is given by the set $\Spec\mathcal{G}$ of the isomorphism classes of indecomposable injective objects in $\mathcal{G}$ and the open subsets parametrised by the Serre subcategories of the full subcategory $\fp(\mathcal{G})$ of finitely presented objects in $\mathcal{G}$; indeed, a typical open set is one of the form \[\{ E \in \Spec \mathcal{G} \mid \Hom_{\mathcal{G}}(S, E) \neq 0~\text{for  some}~S \in \mathcal{S}\}\] where $\mathcal{S}$ is a Serre subcategory of $\fp(\mathcal{G})$. Moreover, there is a basis of quasi-compact open subsets given by those of the form \[ \{ E \in \mathrm{Spec}(\mathcal{G}) \mid \Hom_\mathcal{G}(S, E) \neq 0\}\] where $S$ is an object of $\fp(\mathcal{G})$ (see \cite[Corollary 4.6]{Kra97} or \cite[Corollary 3.5]{Her97}).  

If we take $\mathcal{G} = ((\cT^\cp)^\op, \Ab)$, then it follows from \cref{injectives} (along with \cite[Corollary 1.9]{Kra00}) that the set $\Zg_\cT$ carries a topology, whose basis of open subsets correspond to the Serre subcategories of $\fp((\cT^\cp)^\op, \Ab)$.  

Following \cite{Kra02}, we will view the topology on $\Zg_\cT$ in terms of Serre subcategories of a more convenient category of covariant functors.  An additive functor $F \colon \cT \to \Ab$ is called \textit{coherent} if there exist a morphism $f \colon X \to Y$ in $\cT^\cp$ and an exact sequence
\[\Hom_\cT(Y, -) \xrightarrow{\Hom_\cT(f, -)} \Hom_\cT(X, -) \to F \to 0.\] 
We denote the category of coherent functors and natural transformations by $\mathrm{Coh}(\cT)$.  By \cite[Lemma 7.2]{Kra02}, there is an equivalence of categories between $\fp((\cT^\cp)^\op, \Ab)^\op$ and $\mathrm{Coh}(\cT)$ induced by the assignment taking a functor $S \colon (\cT^\cp)^\op \to \Ab$ to the coherent functor $F \colon \cT \to \Ab$ where $F(Z) := \Hom(S, \Hom_\cT(-, Z))$ for every $Z \in \cT$.  This equivalence yields the following characterisation of the topology on $\Zg_\cT$.

\begin{theorem}[{\cite[Fundamental correspondences]{Kra02}}]\label{Fundam}  There is an order-preserving bijection 
\[ \{\text{open sets of } \Zg_\cT\} \xrightarrow{\sim} \{\text{Serre subcategories of } \mathrm{Coh}(\cT)\} \]

\noindent given by $O \mapsto \mathcal{S}_O := \{F \in \mathrm{Coh}(\cT) \mid F(X) =  0~\text{for all}~X \in \Zg_\cT \setminus O\}$. 
Moreover, the sets \[ (F) := \{X \in \Zg_\cT \mid F(X) \neq 0 \} \] form a basis of quasi-compact open sets.
\end{theorem}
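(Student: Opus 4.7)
The plan is to deduce this statement from the analogous ``fundamental correspondences'' for a locally coherent Grothendieck category, applied to $\cG := ((\cT^\cp)^\op, \Ab)$. The required correspondence for $\cG$ is the theorem of Herzog \cite{Her97} and Krause \cite{Kra97}: the open subsets of $\Spec \cG$ are in order-preserving bijection with the Serre subcategories of $\fp(\cG)$, via
\[
\cS \longmapsto \{E \in \Spec \cG \mid \Hom_\cG(S, E) \neq 0 \text{ for some } S \in \cS\},
\]
and the sets $\{E \in \Spec\cG \mid \Hom_\cG(S, E) \neq 0\}$ with $S \in \fp(\cG)$ form a basis of quasi-compact opens. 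I would simply invoke this result rather than reprove it.

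Next, I would transport this correspondence along two identifications. First, the bijection $\Zg_\cT \isoto \Spec \cG$ from \cite[Lemma 1.7]{Kra00} together with \cref{injectives}, which sends a pure-injective object $X$ to the indecomposable injective $\Hom_\cT(-,X)|_{\cT^\cp}$; this bijection is in fact what \emph{defines} the topology on $\Zg_\cT$ as recalled in \cref{TopZiegler}. Second, the anti-equivalence $\Phi \colon \fp(\cG)^\op \isoto \mathrm{Coh}(\cT)$ of \cite[Lemma 7.2]{Kra02}, under which an object $S$ corresponds to the coherent functor $F$ given by $F(X) := \Hom_\cG(S, \Hom_\cT(-, X))$. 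Because $\Phi$ is an anti-equivalence of abelian categories and the defining properties of a Serre subcategory (closure under subobjects, quotients, and extensions) are self-dual, Serre subcategories of $\fp(\cG)$ correspond bijectively to Serre subcategories of $\mathrm{Coh}(\cT)$.

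The remaining task is to check that, under these identifications, the basic opens and the parametrisation of Serre subcategories take the forms in the statement. This is immediate from the defining formula for $\Phi$: if $S \leftrightarrow F$ and $X \leftrightarrow E$, then $F(X) = \Hom_\cG(S, E)$. Hence the basic open $\{E \mid \Hom_\cG(S,E) \neq 0\}$ pulls back to $(F) = \{X \in \Zg_\cT \mid F(X) \neq 0\}$, and the Serre subcategory $\{S \in \fp(\cG) \mid \Hom_\cG(S,E)=0 \text{ for all } E \notin O\}$ corresponds to $\mathcal{S}_O = \{F \in \mathrm{Coh}(\cT) \mid F(X) = 0 \text{ for all } X \in \Zg_\cT \setminus O\}$.

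The only genuine input beyond formal transport is the Herzog--Krause theorem for $\cG$ itself, which rests on the bijection between Serre subcategories of $\fp(\cG)$ and hereditary torsion pairs of finite type on $\cG$, together with the identification of indecomposable injectives in the associated quotient categories with points of the corresponding closed subsets. Assuming this input, I do not anticipate any real obstacle: the proof is essentially careful bookkeeping of the two equivalences $(\Zg_\cT, \text{topology}) \leftrightarrow (\Spec \cG, \text{topology})$ and $\fp(\cG)^\op \leftrightarrow \mathrm{Coh}(\cT)$.
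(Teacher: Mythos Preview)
Your proposal is correct and matches the paper's approach exactly: the paper does not give an independent proof of this theorem but cites \cite[Fundamental correspondences]{Kra02}, and the preceding paragraph of \cref{TopZiegler} explains precisely the two transports you describe---the identification $\Zg_\cT \leftrightarrow \Spec\cG$ via \cref{injectives} and \cite[Lemma 1.7]{Kra00}, and the anti-equivalence $\fp(\cG)^\op \simeq \mathrm{Coh}(\cT)$ of \cite[Lemma 7.2]{Kra02}---as the route from the Herzog--Krause correspondence for $\cG$ to the stated form.
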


\subsection{Auslander--Reiten triangles}\label{AR-triangles}
Next we outline the connection between Auslander-Reiten triangles in $\cT^\cp$ and the isolated points of $\Zg_\cT$, i.e., those $X\in \Zg_\cT$ such that $\{X\}$ is an open subset.

A triangle \[ L \xrightarrow{f} M \xrightarrow{g} N \xrightarrow{h} \Sigma L\] in a triangulated category $\cC$ is called an \emph{Auslander--Reiten triangle}, or an \emph{AR-triangle (starting from $L$)}, if $f$ is left almost split and $N$ has local endomorphism ring.  Equivalently, we could require that $g$ is right almost split and $L$ has local endomorphism ring or, alternatively, that $f$ is left almost split and $g$ is right almost split (see \cite[Lemma 3.2]{Bel04}).

\begin{lemma}\label{AR isolated}
Suppose $L \in \Zg_\cT$ is compact as an object of $\cT$ and that 
there is an AR-triangle
\[  L \xrightarrow{f} M \xrightarrow{g} N \xrightarrow{h} \Sigma L\]
in $\cT^\cp$. 
Then $L$ is isolated in $\Zg_\cT$.
\end{lemma}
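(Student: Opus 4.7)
My plan is to exhibit a coherent functor $F \colon \cT \to \Ab$ whose basic open set $(F)$ in $\Zg_\cT$ equals $\{L\}$; by \cref{Fundam}, this shows that $\{L\}$ is open and hence that $L$ is isolated. Given the AR-triangle in $\cT^\cp$, the natural candidate is the coherent functor $F$ defined by the exact sequence
\[\Hom_\cT(M, -) \xrightarrow{\Hom_\cT(f, -)} \Hom_\cT(L, -) \to F \to 0,\]
which lies in $\mathrm{Coh}(\cT)$ precisely because both $L$ and $M$ are compact. By construction, $F(X) = \Hom_\cT(L, X)/\Image\Hom_\cT(f, X)$, i.e., $F(X)$ measures morphisms $L \to X$ modulo those factoring through $f$.

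First I would verify that $F(L) \neq 0$: the identity $1_L$ cannot factor through $f$, for if $1_L = bf$ with $b \colon M \to L$, then $b$ would be a retraction of $f$, contradicting the fact that $f$ is not a split monomorphism (which is part of being left almost split). Hence $[1_L]$ is a nonzero element of $F(L)$, so $L \in (F)$.

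Next I would prove the reverse inclusion $(F) \cap \Zg_\cT \subseteq \{L\}$. Take $X \in \Zg_\cT$ with $F(X) \neq 0$ and pick $a \colon L \to X$ that does not factor through $f$. By the contrapositive of the left almost split property of $f$, any such $a$ must be a split monomorphism. Thus $L$ is a direct summand of $X$. Since $X$ is indecomposable as a point of $\Zg_\cT$ and $L$ is likewise indecomposable with local endomorphism ring, it follows that $X \cong L$.

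The crucial observation is that the left-almost-split property of $f$ gives a sharp dichotomy: every morphism $L \to X$ either factors through $f$ (contributing nothing to $F(X)$) or is a split monomorphism (forcing $X \cong L$ in $\Zg_\cT$). The compactness of $L$ and $M$ is exactly what is needed for $F$ to be coherent, so the hypothesis that the AR-triangle sits inside $\cT^\cp$ is essential. I do not anticipate any serious obstacle beyond carefully invoking these properties.
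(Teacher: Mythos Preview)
Your approach is essentially the same as the paper's: define the coherent functor $F$ as the cokernel of $\Hom_\cT(f,-)$ and show $(F)=\{L\}$. However, there is a genuine gap in your argument for the inclusion $(F)\subseteq\{L\}$.

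You take $X\in\Zg_\cT$ and apply ``the contrapositive of the left almost split property of $f$'' to a morphism $a\colon L\to X$. But the hypothesis only tells you that $f$ is left almost split \emph{in $\cT^\cp$}, i.e., that every non-split-mono $L\to C$ with $C$ compact factors through $f$. The point $X\in\Zg_\cT$ need not be compact, so you are not entitled to conclude that $a$ is a split monomorphism. This is precisely the subtlety the paper addresses in its first sentence: by \cite[Proposition 3.2]{Kra05a}, an AR-triangle in $\cT^\cp$ whose first term is pure-injective is also an AR-triangle in $\cT$. Since $L\in\Zg_\cT$ is pure-injective, $f$ is left almost split in all of $\cT$, and then your argument goes through. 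Without this step (or an equivalent argument using pure-injectivity of $X$ to factor through compacts), the dichotomy you describe is unjustified for non-compact $X$. Your final paragraph suggests you view the compactness hypothesis as serving only to make $F$ coherent, but it simultaneously restricts the scope of the almost split property, and bridging that restriction is the nontrivial content you are missing.
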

\begin{proof}
In \cite[Proposition 3.2]{Kra05a} it is shown that the triangle is also an AR-triangle in $\cT$.  We define a coherent functor $G_L$ by the following exact sequence 
\[\Hom_\cT(M, -) \xrightarrow{\Hom_\cT(f, -)} \Hom_\cT(L, -) \to G_L \to 0.\]  It follows from the definitions that, given an indecomposable object $X\in \cT^\cp$, we have $G_{L}(X)\neq 0$ if and only if $X\cong L$.  In other words $(G_L) = \{L\}$.
\end{proof}

\subsection{The existence of non-compact indecomposable pure-injective objects.}

We are now ready to connect properties of the Ziegler spectrum with the existence of non-compact indecomposable pure-injective objects.

\begin{proposition}\label{Existence}
Let $F \colon \cT \to \Ab$ be a coherent functor.  Assume the following conditions hold:
\begin{enumerate}[label=(\arabic*), font=\normalfont]
\item \label{AR}  Every $L\in (F)$ with $L$ compact as an object of $\cT$ admits an AR-triangle in $\cT^{\cp}$ starting from $L$.
\item \label{infinite} The open set $(F)$ contains, up to isomorphism, infinitely many objects in $\cT^\cp$. 
\end{enumerate}
Then there exists a non-compact indecomposable pure-injective object in $\cT$.
\end{proposition}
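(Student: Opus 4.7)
My plan is to argue by contradiction. I would suppose that every indecomposable pure-injective object in $\cT$ is compact and then derive a contradiction from hypothesis \cref{infinite}, by playing the quasi-compactness of $(F)$ against the isolated-point criterion provided by \cref{AR isolated}.

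Under this contrary assumption, every point of the open subset $(F)\subseteq \Zg_\cT$ is represented by an indecomposable object of $\cT^\cp$. Hypothesis \cref{AR} then applies to every such point $L$, giving an AR-triangle in $\cT^\cp$ starting from $L$, and \cref{AR isolated} shows that $\{L\}$ is open in $\Zg_\cT$. Restricting to $(F)$, this shows that every singleton in $(F)$ is open; that is, the subspace $(F)$ carries the discrete topology. By \cref{Fundam}, however, $(F)$ is a quasi-compact open subset of $\Zg_\cT$. Since a discrete topological space is quasi-compact if and only if it is finite, $(F)$ must contain only finitely many points---contradicting hypothesis \cref{infinite}, which guarantees infinitely many (distinct) isomorphism classes of compact objects in $(F)$.

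The contradiction shows that there must exist an indecomposable pure-injective object in $\cT$ that is not compact; in fact, the argument produces one as a point of $(F)\setminus \cT^\cp$. I do not expect any serious obstacle here, since the argument is purely topological; the only minor point to check is that isolation of each compact point in $\Zg_\cT$ (obtained from \cref{AR isolated}) immediately passes to isolation in the subspace $(F)$, which is automatic from the definition of the subspace topology. The conceptual heart of the proof is simply to observe that quasi-compactness of $(F)$ together with the AR-triangle hypothesis forces $(F)$ to contain a non-isolated point, which then cannot belong to $\cT^\cp$.
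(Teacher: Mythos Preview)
Your proposal is correct and follows essentially the same route as the paper: assume all points of $(F)$ are compact, use \cref{AR isolated} to see they are all isolated, and contradict quasi-compactness of $(F)$ from \cref{Fundam} via hypothesis \cref{infinite}. One cosmetic remark: to justify the parenthetical claim that the non-compact point actually lies in $(F)$, it is cleaner to take as the contradiction hypothesis ``every point of $(F)$ is compact'' rather than ``every indecomposable pure-injective is compact''; the paper does exactly this, but it makes no difference for the statement as written.
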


\begin{proof}
Suppose that $(F)$ consists, up to isomorphism, only objects in $\cT^\cp$.
By assumption, the open set $(F)$ is non-empty. Moreover every $L\in (F)$ admits an AR-triangle $L\xrightarrow{f} M\to N\to \Sigma L$ in $\cT^\cp$.  Then every $L\in (F)$ is isolated by \cref{AR isolated}.
Hence the open set $(F)$ can be written as the infinite (disjoint) union $\bigcup_{L\in (F)} \{L\}$ of open sets $\{L\}$, but this is impossible since $(F)$ is quasi-compact by \cref{Fundam}. Therefore $(F)$ must contain a point $M\in (F)$ which is not compact as an object in $\cT$.
\end{proof}

\newcommand{\etalchar}[1]{$^{#1}$}
\providecommand{\bysame}{\leavevmode\hbox to3em{\hrulefill}\thinspace}
\providecommand{\MR}{\relax\ifhmode\unskip\space\fi MR }
\providecommand{\MRhref}[2]{%
  \href{http://www.ams.org/mathscinet-getitem?mr=#1}{#2}
}
\providecommand{\href}[2]{#2}

\end{document}